\newtheorem{theo}{Th\'eor\`eme}[section]
\newtheorem{lem}{Lemme}[section]
\newtheorem{prop}{Proposition}[section]
\newtheorem{cor}{Corollaire}[section]
\newcommand{\id}{\mathrm{id}}
\title{Compatibilit\'e des structures riemanniennes 
et des structures de Jacobi}
\author{Yacine A\"it Amrane, Ahmed Zeglaoui}
\begin{document}

\maketitle

\selectlanguage{french} \vspace*{2cm} \hspace{-0.6cm}{\bf R\'esum\'e.} 
On d\'efinit une notion de compatibilit\'e entre une structure riemannienne et une structure de Jacobi. On montre que dans le cas des structures de Poisson, des structures de contact et des structures  
localement conform\'ement symplectiques, des exemples fondamentaux de structures de Jacobi, on obtient respectivement des structures de Poisson riemanniennes au sens de M. Boucetta, des structures $\frac{1}{2}$-Kenmotsu et des structures localement conform\'ement k\"ahl\'eriennes. 
\bigskip\\
{\bf Mots Cl\'es.} Vari\'et\'es de Jacobi, de Poisson pseudo-riemanniennes,  riemanniennes de contact, riemanniennes presque de contact, de Kenmotsu, localement conform\'ement symplectiques, localement conform\'ement k\"ahl\'eriennes, alg\'ebro\"ides de Lie.\bigskip\\
{\bf 2010 MSC.} 53C15.
 \bigskip\bigskip\\

\hspace{-0.6cm}{\bf\Large Introduction}\medskip\\

Les vari\'et\'es de Jacobi ont \'et\'e introduites s\'epar\'ement par  A. Lichnerowicz et A. Kirillov. Elles g\'en\'eralisent \`a la fois les vari\'et\'es de Poisson, les vari\'et\'es de contact et les vari\'et\'es localement conform\'ement symplectiques. On se pose la question naturelle de l'existence d'une notion de compatibilit\'e entre une structure de Jacobi et une structure pseudo-riemannienne,  qui pour des structures de Jacobi particuli\`eres, donne lieu \`a des structures g\'eom\'etriques remarquables. Dans ce travail, on introduit une telle notion qui dans le cas d'une vari\'et\'e de Poisson donne une structure de Poisson pseudo-riemannienne au sens de M. Boucetta. On montre que pour une structure de contact riemannienne, avec cette notion de compatibilit\'e on obtient une structure $\frac{1}{2}$-Kenmotsu, et que dans le cas d'une structure localement conform\'ement symplectique et d'une m\'etrique "associ\'ee", on retrouve une structure localement conform\'ement k\"ahl\'erienne.

Soit $M$ une vari\'et\'e diff\'erentiable. On consid\`ere sur $M$ un champ de bivecteurs $\pi$, un champ de vecteurs $\xi$ et une $1$-forme diff\'erentielle $\lambda$, et on associe au triplet $(\pi,\xi,\lambda)$ un alg\'ebro\"ide altern\'e $(T^\ast M,\sharp_{\pi,\xi},[.,.]_{\pi,\xi}^\lambda)$ sur $M$. On montre que si le  couple $(\pi,\xi)$ d\'efinit une structure Jacobi et que $\pi\neq 0$, l'alg\'ebro\"ide altern\'e $(T^\ast M,\sharp_{\pi,\xi},[.,.]_{\pi,\xi}^\lambda)$ est un pr\'ealg\'ebro\"ide de Lie si et seulement si $\sharp_{\pi,\xi}(\lambda)=\xi$. Dans le cas  $\xi=\lambda=0$, on retrouve l'alg\'ebro\"ide cotangent de la vari\'et\'e de Poisson $(M,\pi)$. On montre aussi que dans le cas o\`u $(\pi,\xi)$ est une structure de Jacobi associ\'ee \`a une forme de contact $\eta$, respectivement une structure localement  conform\'ement symplectique $(\omega,\theta)$, l'alg\'ebro\"ide altern\'e $(T^\ast
M,\sharp_{\pi,\xi},[.,.]_{\pi,\xi}^\eta)$, respectivement $(T^\ast
M,\sharp_{\pi,\xi},[.,.]_{\pi,\xi}^\theta)$, est un alg\'ebro\"ide de Lie isomorphe \`a l'alg\'ebro\"ide tangent $(TM,\textrm{Id}_M,[,])$ de $M$. 

Ensuite, pour un triplet $(\pi,\xi,g)$ form\'e d'un champ de bivecteurs $\pi$, un champ de vecteurs $\xi$ et une m\'etrique pseudo-riemannienne $g$ sur $M$, on pose $\lambda=g(\xi,\xi)\flat_g(\xi)-\flat_g(J\xi)$ et $\left[.,.\right] _{\pi ,\xi }^{g}=[.,.]_{\pi,\xi}^\lambda$, o\`u $\flat_g:TM\rightarrow T^\ast M$ et $\sharp_g=\flat_g^{-1}$ sont les isomorphismes musicaux de $g$, et $J$ est l'endomorphisme du fibr\'e tangent $TM$ donn\'e par $\pi(\alpha,\beta)=g(J\sharp_g(\alpha),\sharp_g(\beta))$, et on d\'efinit une d\'eriv\'ee contravariante $\mathcal{D}$ comme \'etant l'unique d\'eriv\'ee contravariante, sym\'etrique par rapport au crochet $\left[.,.\right] _{\pi ,\xi }^{g}$ et compatible avec $g$. Si $(\pi,\xi)$ est une structure de Jacobi, et si $\sharp_{\pi,\xi}$ est une isom\'etrie, une condition qui est particuli\`erement satisfaite dans  le cas d'une forme de contact et le cas structure localement conform\'ement symplectique, on montre que $\mathcal{D}$ est reli\'ee \`a la connexion de Levi-Civita $\nabla$ de $g$ par $\sharp_{\pi,\xi}(\mathcal{D}_\alpha\beta)=\nabla_{\sharp_{\pi,\xi}(\alpha)}\sharp_{\pi,\xi}(\beta)$.

Finalement, avec l'aide de la d\'eriv\'ee  de Levi-Civita contravariante $\mathcal{D}$ on introduit une notion de compatibilit\'e du triplet $(\pi,\xi,g)$. Dans le cas $\xi=0$, il s'agit juste de la compatibilit\'e du couple $(\pi,g)$ introduite par M. Boucetta, \cite{boucetta1}. Dans le cas d'une structure de Jacobi $(\pi,\xi)$ associ\'ee \`a une structure riemannienne de contact $(\eta,g)$, le triplet $(\pi,\xi,g)$ est compatible si et seulement si la structure $(\eta,g)$ est $\frac{1}{2}$-Kenmotsu. Dans le cas d'une structure de Jacobi $(\pi,\xi)$ associ\'ee \`a une structure localement conform\'ement symplectique $(\omega,\theta)$, si $g$ est une m\'etrique en quelque sorte associ\'ee, alors le triplet $(\pi,\xi,g)$ est compatible si et seulement si la structure $(\omega,\theta,g)$ est  localement conform\'ement k\"ahl\'erienne.

\section{Pr\'ealg\'ebro\"ides de Lie associ\'es \`a une vari\'et\'e de Jacobi}

\subsection{Pr\'ealg\'ebro\"ides de Lie associ\'es \`a une vari\'et\'e de
Jacobi}

Tout au long de ce travail $M$ d\'esigne une vari\'et\'e diff\'erentiable, $\pi$ un champ de bivecteurs et $\xi$ un champ de vecteurs sur $M$.   

Le couple $(\pi,\xi)$ d\'efinit une structure de Jacobi sur $M$ si on a les relations 
\begin{equation}\label{jacobi}
\begin{array}{ccc}
\left[ \pi ,\pi \right] =2\xi \wedge \pi & \text{ \ \ et \ \ } &
\left[ \xi
,\pi \right] :=\mathcal{L}_{\xi }\pi =0,%
\end{array}
\end{equation}
o\`u $\left[ .,.\right]$ d\'esigne le crochet de Schouten-Nijenhuis.
On dit que $(M,\pi,\xi)$ est une vari\'et\'e de Jacobi. Le cas $\xi
=0$, les relations ci-dessus \'etant r\'eduites \`a $\left[ \pi ,\pi
\right]=0$, correspond \`a une structure de Poisson $(M,\pi)$.

Rappelons qu'un alg\'ebro\"{\i}de altern\'e sur $M$ est un triplet $\left( E,\sharp _{E},\left[.,.\right]_{E}\right)$ o\`u $E$ est l'espace total d'un fibr\'e
vectoriel au-dessus de $M$, $\sharp_{E}$ est un morphisme de
fibr\'es vectoriels de $E$ dans $TM$, appel\'e l'application ancre,
et $\left[ .,.\right]_{E}:  \Gamma(E) \times \Gamma(E)\longrightarrow \Gamma(E)$, $(s,t)\longmapsto\left[ s,t\right]_{E}$, 
est une application $\mathbb{R}$-bilin\'eaire altern\'ee sur l'espace 
$\Gamma(E)$ des sections de $E$, v\'erifiant l'identit\'e
de Leibniz :
\begin{equation*}
\left[ s,\varphi t\right] _{E}=\varphi \left[ s,t\right] _{E}+
\sharp_{E}(s)(\varphi) t, \quad \forall \varphi \in C^{\infty}(M),\;
\forall s,t\in \Gamma(E).
\end{equation*}
Un alg\'ebro\"{\i}de altern\'e
$\left(E,\sharp_{E},\left[.,.\right]_{E}\right)$ est un
pr\'{e}alg\'ebro\"{\i}de de Lie si
\begin{equation*}
\sharp_{E}\left( \left[ s,t\right]_{E}\right) =\left[
\sharp_{E}(s),\sharp_{E}(t) \right] , \quad \forall s,t\in
\Gamma(E),
\end{equation*}
et un alg\'ebro\"{\i}de de Lie si $\left(\Gamma(E) ,\left[
.,.\right]_{E}\right) $ est une alg\`ebre de Lie, c'est-\`a-dire si
\begin{equation*}
\left[ s,\left[ t,r\right] _{E}\right] _{E}+\left[ t,\left[ r,s\right] _{E}%
\right] _{E}+\left[ r,\left[ s,t\right] _{E}\right] _{E}=0, \quad
\forall  s,t,r\in \Gamma(E).
\end{equation*}
Un alg\'ebro\"{\i}de de Lie est un pr\'ealg\'ebro\"{\i}de de Lie.
D'un autre c\^ot\'e, un pr\'ealg\'ebro\"{\i}de de Lie $\left(
E,\sharp_{E},\left[ .,.\right]_{E}\right)$ dont l'ancre $\sharp _{E}$ est un isomorphisme  est un
alg\'ebro\"{\i}de de Lie isomorphe \`a l'alg\'ebro\"{\i}de tangent
$(TM,\id_M,[.,.])$ de $M$.

Soit $\sharp_{\pi }: T^\ast M \longrightarrow TM$  le morphisme de fibr\'es vectoriels d\'efini par $
\beta\left(\sharp_{\pi }\left( \alpha \right)\right)=\pi\left(
\alpha ,\beta\right)$ et soit l'application $\left[ .,.\right]_{\pi }:\Omega^{1}(M) \times
\Omega ^{1}(M) \longrightarrow \Omega^{1}(M)$  d\'efinie par
\begin{equation*}\label{pi-koszul}
\left[ \alpha ,\beta \right] _{\pi }:=\mathcal{L}_{\sharp_{\pi
}(\alpha)}\beta -\mathcal{L}_{\sharp_{\pi}(\beta)}\alpha -d\left(
\pi(\alpha,\beta) \right),
\end{equation*}
appel\'ee le crochet de Koszul. Consid\'erons le morphisme de fibr\'es vectoriels
$\sharp_{\pi,\xi}:  T^{\ast}M  \longrightarrow  TM$ d\'efini par
\begin{equation*}
\sharp_{\pi,\xi}(\alpha)=\sharp_{\pi}(\alpha)+\alpha(\xi) \xi
\end{equation*}
et, pour une $1$-forme $\lambda \in \Omega ^{1}(M)$, l'application
$\left[ .,.\right]_{\pi ,\xi }^{\lambda}:  \Omega ^{1}(M)\times
\Omega ^{1}(M)\longrightarrow \Omega ^{1}(M)$ d\'efinie par
\begin{equation*}
\left[ \alpha ,\beta \right]_{\pi,\xi}^{\lambda}:=\left[
\alpha,\beta \right]_{\pi }+\alpha(\xi) \left(\mathcal{L}_{\xi}\beta
-\beta\right)-\beta(\xi) \left(\mathcal{L}_{\xi}\alpha-\alpha\right)
-\pi(\alpha,\beta) \lambda.
\end{equation*}
Le triplet $(T^{\ast}M,\sharp_{\pi,\xi},\left[.,.\right]_{\pi,\xi}^{\lambda})$, 
associ\'e \`a $(\pi,\xi,\lambda)$, est un alg\'ebro\"{\i}de altern\'e sur $M$.

Dans le cas o\`{u} $\xi =\lambda =0$, le triplet
$(T^{\ast}M,\sharp_{\pi,\xi },\left[ .,.\right]_{\pi,\xi
}^{\lambda})$ n'est rien d'autre que l'alg\'ebro\"{\i}de altern\'e
$(T^{\ast}M,\sharp_{\pi},\left[ .,.\right]_{\pi})$  associ\'e au
champ de bivecteurs $\pi$. Rappelons que quelles que soient les
formes diff\'erentielles $\alpha ,\beta ,\gamma \in \Omega ^{1}(M)$
on a
\begin{equation}\label{poisson-pre-alg}
\gamma\left(\sharp_\pi\left(\left[\alpha,\beta\right]_\pi\right)-\left[\sharp_\pi(\alpha),\sharp_\pi(\beta)\right]\right)
=\dfrac{1}{2}\left[\pi,\pi\right]\left(\alpha ,\beta,\gamma \right),
\end{equation}
et quelles que soient les fonctions $\varphi,\psi,\phi\in
C^\infty(M)$ on a
\begin{equation*}\label{poisson-alg}
\left[d\varphi,\left[d\psi,d\phi\right]_{\pi}\right]_{\pi}+\left[d\psi,\left[
d\phi,d\varphi\right]_{\pi}\right]_{\pi}+\left[d\phi,\left[d\varphi,d\psi
\right]_{\pi}\right]_{\pi}=-\dfrac{1}{2}d\left(\left[\pi,\pi\right]\left(d\varphi
,d\psi,d\phi \right)\right).
\end{equation*}
Ainsi, $(T^{\ast}M,\sharp_{\pi},\left[.,.\right]_{\pi })$ est un
alg\'ebro\"{\i}de de Lie si et seulement si $\pi$ est un tenseur de
Poisson. Si $\pi$ est un tenseur de Poisson sur $M$, le triplet $\left( T^{\ast
}M,\sharp _{\pi },\left[ .,.\right]_{\pi}\right)$ est appel\'e l'alg\'ebro\"{\i}de cotangent de la vari\'et\'e de
Poisson $(M,\pi)$. Dans le cas d'une structure de Jacobi on a le r\'esultat suivant

\begin{theo}\label{torsion-pre-alg}
Supposons que $(\pi,\xi)$ est une structure de Jacobi sur $M$ et soit
$\lambda\in \Omega^1(M)$. On a
$$
\sharp_{\pi,\xi}(\left[\alpha,\beta\right]_{\pi,\xi}^\lambda)-
\left[\sharp_{\pi,\xi}(\alpha),
\sharp_{\pi,\xi}(\beta)\right]=\pi(\alpha,\beta)\left(\xi-\sharp_{\pi,\xi}(\lambda)\right),
$$
quelles que soient les formes $\alpha,\beta\in \Omega^1(M)$.
\end{theo}

\begin{proof} On a d'une part
$$
\begin{array}{lll}
\sharp_{\pi,\xi}(\left[ \alpha ,\beta \right]_{\pi,\xi}^{\lambda})
&=&\sharp_{\pi}\left( \left[ \alpha ,\beta \right]_{\pi}\right)-\alpha(\xi)\sharp_\pi(\beta)+\beta(\xi)\sharp_\pi(\alpha)-\pi(\alpha,\beta)\sharp_{\pi,\xi}(\lambda)
+\alpha(\xi)\sharp_\pi(\mathcal{L}_\xi\beta)\\ 
&& -\beta(\xi)\sharp_\pi(\mathcal{L}_\xi
\alpha)+\left[\sharp_{\pi,\xi}(\alpha)(\beta(\xi))-\sharp_{\pi,\xi}(\beta)(\alpha(\xi))-\xi(\pi(\alpha,\beta))\right. \\ 
&& \left. +\beta(\mathcal{L}_\xi(\sharp_\pi(\alpha)))-\alpha(\mathcal{L}_\xi(\sharp_\pi(\beta)))\right]\xi
\end{array}
$$
et d'autre part
$$
\begin{array}{lll}
\left[ \sharp_{\pi,\xi}(\alpha),\sharp_{\pi,\xi}(\beta) \right] &=&\left[\sharp_{\pi}(\alpha),\sharp_{\pi}(\beta) \right] +\alpha(\xi)\mathcal{L}_\xi(\sharp_{\pi}(\beta)) -\beta(\xi)
\mathcal{L}_\xi(\sharp_{\pi}(\alpha))  \\ 
&&+\left[\sharp_{\pi,\xi}(\alpha)(\beta(\xi)) -
\sharp_{\pi,\xi}(\beta)(\alpha(\xi)) \right] \xi .
\end{array}
$$
Ainsi, en utilisant l'identit\'e (\ref{poisson-pre-alg}),  
on d\'eduit que
$$
\begin{array}{lll}
\sharp_{\pi,\xi}( \left[ \alpha ,\beta
\right]_{\pi,\xi}^{\lambda}) -\left[\sharp_{\pi,\xi}(\alpha)
,\sharp_{\pi,\xi}(\beta)\right]\!\!
&\!=\!&\!\!\left(\frac{1}{2}\left[\pi,\pi\right]-\xi\wedge
\pi\right)(\alpha,\beta,\cdot)-\alpha(\xi)\mathcal{L}_\xi\sharp_\pi(\beta)
+\beta(\xi)\mathcal{L}_\xi\sharp_\pi(\alpha) \\   
&& -\left[\alpha(\mathcal{L}_\xi\sharp_\pi(\beta))-\beta(\mathcal{L}_\xi\sharp_\pi(\alpha))+\mathcal{L}_\xi\pi(\alpha,\beta)\right]\xi\\  
&& +\pi(\alpha,\beta)(\xi-\sharp_{\pi,\xi}(\lambda)).
\end{array} 
$$
Il reste \`a utiliser les relations (\ref{jacobi}). 
\end{proof}

\begin{cor}\label{lambda-pre-alg}
Supposons que $(\pi,\xi)$ est une structure de Jacobi sur $M$ et soit
$\lambda\in\Omega^1(M)$. Si $\sharp_{\pi,\xi}(\lambda) =\xi$, alors
l'alg\'ebro\"{\i}de altern\'e $(T^{\ast}M,\sharp_{\pi ,\xi },\left[
.,.\right]_{\pi ,\xi}^{\lambda})$ associ\'e au triplet $(\pi,\xi
,\lambda)$ est un pr\'ealg\'ebro\"{\i}de de Lie, c'est-\`a-dire
$$
\sharp_{\pi,\xi}(\left[\alpha,\beta\right]_{\pi,\xi}^\lambda)=
\left[\sharp_{\pi,\xi}(\alpha), \sharp_{\pi,\xi}(\beta)\right],
$$
quelles que soient les formes $\alpha ,\beta \in \Omega ^{1}(M)$. La
r\'eciproque aussi est vraie si $\pi\neq 0$.
\end{cor}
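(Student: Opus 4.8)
The plan is to read off both implications directly from Theorem \ref{torsion-pre-alg}. That theorem supplies the exact defect
$$\sharp_{\pi,\xi}([\alpha,\beta]_{\pi,\xi}^\lambda)-[\sharp_{\pi,\xi}(\alpha),\sharp_{\pi,\xi}(\beta)]=\pi(\alpha,\beta)\,(\xi-\sharp_{\pi,\xi}(\lambda)),$$
and the crucial structural feature is that the vector field $V:=\xi-\sharp_{\pi,\xi}(\lambda)$ on the right is a single, fixed field independent of the forms $\alpha,\beta$. Thus the pre-Lie algebroid condition is equivalent to asking that the product $\pi(\alpha,\beta)\,V$ vanish for all $\alpha,\beta$, and the corollary amounts to saying that this product vanishes identically precisely when $V=0$, at least when $\pi$ is nonzero.

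For the direct implication I would substitute the hypothesis $\sharp_{\pi,\xi}(\lambda)=\xi$, i.e.\ $V=0$, into the identity above. The right-hand side then vanishes for every pair $\alpha,\beta\in\Omega^1(M)$, which is exactly the pre-Lie algebroid equation $\sharp_{\pi,\xi}([\alpha,\beta]_{\pi,\xi}^\lambda)=[\sharp_{\pi,\xi}(\alpha),\sharp_{\pi,\xi}(\beta)]$. This half needs no calculation and does not use $\pi\neq0$.

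For the converse I would assume the pre-Lie algebroid equation and feed it back into Theorem \ref{torsion-pre-alg}, which forces
$$\pi(\alpha,\beta)\,V=0\qquad\text{pour toutes }\alpha,\beta\in\Omega^1(M).$$
The place where $\pi\neq0$ enters, and the only step requiring any care, is the cancellation of the scalar factor: at a point $p$ where $V_p\neq0$ the equation gives $\pi_p(\alpha,\beta)=0$ for all covectors, so $\pi$ vanishes at $p$; contrapositively $V$ vanishes on the open set $\{p:\pi_p\neq0\}$, which is nonempty precisely because $\pi\neq0$. I would then upgrade this to $V=0$ on all of $M$, hence $\sharp_{\pi,\xi}(\lambda)=\xi$: where $\pi$ is nowhere vanishing this is immediate, and in general one propagates the vanishing of the smooth field $V$ by continuity from the set $\{\pi\neq0\}$. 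The expected main obstacle is therefore not the computation, which is entirely subsumed by Theorem \ref{torsion-pre-alg}, but this passage from a pointwise cancellation to the global identity, which is exactly what the nondegeneracy hypothesis $\pi\neq0$ is meant to license.
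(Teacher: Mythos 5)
Your argument follows exactly the paper's route: the paper's entire proof of this corollary is the single line ``D\'ecoule du th\'eor\`eme ci-dessus'', i.e.\ both implications are read off from the defect formula of Theorem \ref{torsion-pre-alg}, just as you do. The direct implication is airtight and, as you note, uses neither $\pi\neq0$ nor any computation.

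One caveat on your converse, in the step you yourself flag as the delicate one. From $\pi(\alpha,\beta)\,V=0$ you correctly get $V=0$ on the open set $U=\{p: \pi_p\neq0\}$, but ``propagating by continuity'' only extends this to the closure $\overline{U}$, not to all of $M$. If $\pi$ vanishes identically on some open set $W$ disjoint from $\overline{U}$, nothing forces $V$ to vanish on $W$: there $V=(1-\lambda(\xi))\,\xi$, which need not be zero. So the converse, as you prove it, holds on $\overline{\{\pi\neq0\}}$ and requires this set to be all of $M$ (e.g.\ $\pi$ nowhere zero, or $\{\pi\neq0\}$ dense) to give the global identity $\sharp_{\pi,\xi}(\lambda)=\xi$. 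To be fair, the paper's one-line proof is silent on exactly this point and its hypothesis ``$\pi\neq0$'' is stated with the same looseness, so you are reproducing a gap in the source rather than introducing a new one; but you should not assert that continuity alone settles the general case.
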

\begin{proof}
D\'ecoule du th\'eor\`eme ci-dessus.
\end{proof}

\subsection{Alg\'ebro\"ide cotangent \`a une vari\'et\'e de contact}

Supposons $M$ de dimension impaire $2n+1$, $n\in \mathbb{N}^\ast$. Rappelons qu'une forme de contact sur $M$ est une $1$-forme diff\'erentielle $\eta$ sur $M$ telle que la forme $\eta \wedge (d\eta)^{\wedge^{n}}$ est une forme volume. Supposons que le couple $(\pi,\xi)$ est la structure de Jacobi associ\'ee \`a une forme de contact $\eta$ sur $M$, c'est-\`a-dire qu'on a
$$
\pi(\alpha,\beta)=d\eta\left(\sharp_{\eta }(\alpha)
,\sharp_{\eta}(\beta)\right),
$$
o\`u $\sharp_{\eta }$ est l'isomorphisme inverse de l'isomorphisme de fibr\'es vectoriels $\flat_{\eta }:  TM \rightarrow
T^\ast M$, $\flat_\eta(X)=-i_{X}d\eta +\eta(X) \eta$, et $\xi=\sharp_\eta(\eta)$. Le   champ $\xi$ est appel\'e le champ de Reeb associ\'e \`a la structure de contact $(M,\eta)$, il est caract\'eris\'e par les formules  
\begin{equation*}
\begin{array}{ccc}
i_{\xi}d\eta :=d\eta( \xi ,.) =0 & \text{ \ \ et \ \ } & i_{\xi}\eta
:=\eta(\xi) =1. 
\end{array}%
\end{equation*}

\begin{prop}\label{eta-alg}
L'alg\'ebro\"{\i}de altern\'e
$(T^{\ast}M,\sharp_{\pi ,\xi},\left[ .,.\right]_{\pi,\xi }^{\eta})$
est un alg\'ebro\"{\i}de de Lie isomorphe \`a l'alg\'ebro\"{\i}de
tangent de $M$.
\end{prop}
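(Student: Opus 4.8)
Le plan est d'appliquer le corollaire \ref{lambda-pre-alg} avec $\lambda=\eta$, puis de montrer que l'ancre $\sharp_{\pi,\xi}$ est un isomorphisme de fibr\'es vectoriels, afin de conclure gr\^ace au fait rappel\'e plus haut qu'un pr\'ealg\'ebro\"ide de Lie dont l'ancre est un isomorphisme est un alg\'ebro\"ide de Lie isomorphe \`a l'alg\'ebro\"ide tangent de $M$. Comme $(\pi,\xi)$ est par hypoth\`ese une structure de Jacobi, il suffit donc de v\'erifier deux points : d'abord $\sharp_{\pi,\xi}(\eta)=\xi$, ce qui par le corollaire \ref{lambda-pre-alg} fournira la structure de pr\'ealg\'ebro\"ide de Lie ; ensuite l'inversibilit\'e fibre \`a fibre de $\sharp_{\pi,\xi}$.

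Pour le premier point, j'\'ecrirais $\sharp_{\pi,\xi}(\eta)=\sharp_\pi(\eta)+\eta(\xi)\,\xi$. Comme $\xi$ est le champ de Reeb, on a $\eta(\xi)=1$, de sorte qu'il reste \`a montrer que $\sharp_\pi(\eta)=0$. Par d\'efinition de $\sharp_\pi$, pour toute $1$-forme $\beta$ on a $\beta(\sharp_\pi(\eta))=\pi(\eta,\beta)=d\eta(\sharp_\eta(\eta),\sharp_\eta(\beta))=d\eta(\xi,\sharp_\eta(\beta))$, quantit\'e nulle puisque $i_\xi d\eta=0$. Ainsi $\sharp_\pi(\eta)=0$ et $\sharp_{\pi,\xi}(\eta)=\xi$, ce qui donne la structure de pr\'ealg\'ebro\"ide de Lie.

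Le point d\'elicat, et l'obstacle principal, est l'inversibilit\'e de $\sharp_{\pi,\xi}$, que je traiterais en identifiant cette ancre \`a l'isomorphisme musical $\sharp_\eta$. Pour cela, je calculerais d'abord $\sharp_\pi$ en fonction de $\sharp_\eta$. En posant $X=\sharp_\eta(\alpha)$, la d\'efinition $\flat_\eta(X)=-i_X d\eta+\eta(X)\eta$ \'evalu\'ee sur $\xi$ donne $\eta(X)=\alpha(\xi)$. En testant ensuite l'\'egalit\'e $\beta(\sharp_\pi(\alpha))=d\eta(X,\sharp_\eta(\beta))$ sur $\beta=\eta$ on obtient $\eta(\sharp_\pi(\alpha))=0$, puis, en l'exploitant pour $\beta$ quelconque (et donc $\sharp_\eta(\beta)$ arbitraire), on se ram\`ene \`a $d\eta(\sharp_\pi(\alpha)-X,\cdot)=0$. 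La non-d\'eg\'en\'erescence de $d\eta$ en restriction \`a la distribution de contact (le noyau de $\eta$) et le fait que le noyau de $d\eta$ est la droite engendr\'ee par $\xi$ montrent alors que $\sharp_\pi(\alpha)-X$ est proportionnel \`a $\xi$ ; la condition $\eta(\sharp_\pi(\alpha))=0$ fixe ce coefficient et fournit $\sharp_\pi(\alpha)=\sharp_\eta(\alpha)-\alpha(\xi)\,\xi$.

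On en d\'eduit imm\'ediatement $\sharp_{\pi,\xi}(\alpha)=\sharp_\pi(\alpha)+\alpha(\xi)\,\xi=\sharp_\eta(\alpha)$. Comme $\sharp_\eta=\flat_\eta^{-1}$ est un isomorphisme de fibr\'es vectoriels, l'ancre $\sharp_{\pi,\xi}$ l'est aussi ; combin\'e avec la structure de pr\'ealg\'ebro\"ide de Lie \'etablie plus haut, cela entra\^ine que $(T^*M,\sharp_{\pi,\xi},[.,.]_{\pi,\xi}^\eta)$ est un alg\'ebro\"ide de Lie isomorphe \`a l'alg\'ebro\"ide tangent de $M$. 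La difficult\'e technique se concentre donc enti\`erement dans le calcul de $\sharp_\pi$ en termes de $\sharp_\eta$, o\`u il faut manier soigneusement la d\'ecomposition de $TM$ en la distribution de contact et la droite de Reeb.
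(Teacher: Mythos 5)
Votre démonstration est correcte et suit essentiellement la même démarche que celle de l'article : établir l'identité $\sharp_{\pi,\xi}=\sharp_\eta$, en déduire $\sharp_{\pi,\xi}(\eta)=\xi$, puis conclure par le corollaire \ref{lambda-pre-alg} et le fait qu'un préalgébroïde de Lie dont l'ancre est un isomorphisme est un algébroïde de Lie isomorphe à l'algébroïde tangent. La seule différence est de détail : l'article obtient $\sharp_{\pi,\xi}=\sharp_\eta$ par le calcul direct $\beta(\sharp_{\pi,\xi}(\alpha))=\pi(\alpha,\beta)+\eta(X)\eta(Y)=\flat_\eta(Y)(X)$, là où vous passez par la décomposition de $TM$ en distribution de contact et droite de Reeb (noyau de $d\eta$), ce qui est un peu plus long mais tout aussi valable.
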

\begin{proof}
Montrons que $\sharp_{\pi,\xi}$ est \'egal \`a l'isomorphisme
$\sharp_\eta$, inverse de l'isomorphisme $\flat_\eta$. Soient
$\alpha,\beta \in \Omega^1(M)$, et soient $X,Y$ tels que
$\alpha=\flat_\eta(X)$ et $\beta=\flat_\eta(Y)$. Remarquons d'abord
qu'on a $\alpha(\xi)=\flat_\eta(X)(\xi)=\eta(X)$, 
et de m\^eme  $\beta(\xi)=\eta(Y)$. Ainsi, on a
$$
\beta(\sharp_{\pi,\xi}(\alpha))
=\pi(\alpha,\beta)+ \eta(X)\eta(Y) 
=(-i_Yd\eta+\eta(Y)\eta)(X)
=\flat_\eta(Y)(X)
=\beta(\sharp_\eta(\alpha)).
$$
Donc $\sharp_{\pi,\xi}=\sharp_{\eta}$ et en particulier
$\sharp_{\pi,\xi}(\eta)=\sharp_{\eta}(\eta)=\xi$. La proposition
d\'ecoule alors du corollaire \ref{lambda-pre-alg} et du fait que $\sharp_{\pi,\xi}$ est un isomorphisme. 
\end{proof}

Donc si $(M,\eta)$ est une vari\'et\'e de contact et si $(\pi,\xi)$
est la structure de Jacobi associ\'ee, d'apr\`es la proposition
ci-dessus, on a $\sharp_{\pi,\xi}=\sharp_\eta$. Si on pose $\left[
.,.\right]_\eta=\left[ .,.\right]_{\pi,\xi}^\eta$, alors on a un
alg\'ebro\"ide de Lie $(T^\ast M, \sharp_\eta,\left[
.,.\right]_\eta)$ associ\'e naturellement \`a la vari\'et\'e de
contact $(M,\eta)$.
 On pourra l'appeler l'alg\'ebro\"ide cotangent de la
vari\'et\'e de contact $(M,\eta)$.

\subsection{Alg\'ebro\"ide cotangent \`a une vari\'et\'e localement conform\'ement symplectique}

Une structure localement conform\'ement symplectique sur $M$ est un couple 
$(\omega,\theta)$ compos\'e d'une $1$-forme diff\'erentielle ferm\'ee $\theta$ et d'une
$2$-forme diff\'erentielle non d\'eg\'{e}n\'{e}r\'{e}e $\omega$ sur $M$ 
telles que
\begin{equation*}
d\omega +\theta \wedge \omega =0.
\end{equation*}
Dans le cas particulier o\`u la forme $\theta$ est exacte, i. e. 
$\theta =df$, on dit que $(\omega ,df)$ est conform\'ement
symplectique, ce qui est \'equivalent \`a $e^{f}\omega$ est symplectique, d'o\`u la  terminologie.

La proposition ci-dessous montre que la donn\'ee d'une vari\'et\'e
localement conform\'ement symplectique est \'equivalent \`a la
donn\'ee d'une vari\'et\'e de Jacobi dont le champ de bivecteurs
sous-jacent est non d\'eg\'en\'er\'e (voir aussi \cite[\S
2.3, exemple 4]{marle}). N'ayant pas trouv\'e de d\'emonstration dans la
litt\'erature, nous en donnons une ici.

Supposons que $\omega\in\Omega^2(M)$ est une $2$-forme non d\'eg\'en\'er\'ee et soit 
$\theta\in \Omega^1(M)$. Supposons que le couple $(\pi,\xi)$ est associ\'e au couple $(\omega,\theta)$, c'est-\`a-dire qu'on a  $
i_{\sharp_\pi(\alpha)}\omega=-\alpha$ pour tout $\alpha\in\Omega^1(M)$, et $i_\xi\omega = -\theta$. 
Autrement dit, 
$$
\pi(\alpha,\beta)=\omega(\sharp_\omega(\alpha),\sharp_\omega(\beta))
$$ 
o\`u $\sharp_\omega$ est l'isomorphisme inverse de l'isomorphisme de fibr\'es  vectoriels $\flat_\omega:TM \longrightarrow T^\ast M$, $\flat_\omega(X)=-i_X\omega$, et $\xi=\sharp_\omega(\theta)$.  Nous avons le r\'esultat suivant

\begin{lem}\label{omega-theta-pi-xi}
Quels que soient les champs de vecteurs $X,Y,Z\in
\mathfrak{X}(M)$, si $\alpha,\beta,\gamma\in \Omega^1(M)$ sont les
$1$-formes diff\'erentielles telles que $X=\sharp_\pi(\alpha)$,
$Y=\sharp_\pi(\beta)$ et $Z=\sharp_\pi(\gamma)$, alors on a
\begin{enumerate}
\item $(dw + \theta \wedge
\omega)(X,Y,Z)=\left(\frac{1}{2}\left[\pi,\pi\right]-\xi\wedge
\pi\right)(\alpha,\beta,\gamma)$.
\item $\mathcal{L}_\xi\omega(X,Y)=-\mathcal{L}_\xi\pi(\alpha,\beta)$.
\end{enumerate}
\end{lem}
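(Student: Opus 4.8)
The plan is to exploit that $\sharp_\pi$ coincides with $\sharp_\omega=\flat_\omega^{-1}$, so that $\alpha\mapsto X=\sharp_\pi(\alpha)$ is a fibrewise isomorphism and $\omega,\pi$ are mutually inverse. First I would record the dictionary on which everything rests: from $i_{\sharp_\pi(\alpha)}\omega=-\alpha$ one gets $\flat_\omega(\sharp_\pi(\alpha))=\alpha$, hence $\sharp_\pi=\sharp_\omega$ and $\pi(\alpha,\beta)=\omega(X,Y)$; moreover $\gamma(W)=\omega(W,Z)$ for every vector field $W$, and, since $\xi=\sharp_\pi(\theta)$, $\theta(X)=\pi(\alpha,\theta)=-\alpha(\xi)$. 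These identities convert every expression in $\omega,\theta,X,Y,Z$ into one in $\pi,\xi,\alpha,\beta,\gamma$ and back. Note that no Jacobi or lcs hypothesis is needed: the lemma is a pure consequence of the inverse relationship.

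For (1) I would expand $d\omega(X,Y,Z)$ by the intrinsic (Koszul) formula for the exterior derivative of a $2$-form and rewrite each term through the dictionary, e.g. $X(\omega(Y,Z))=\sharp_\pi(\alpha)(\pi(\beta,\gamma))$ and $\omega([X,Y],Z)=\gamma([\sharp_\pi(\alpha),\sharp_\pi(\beta)])$, cyclically. Independently I would compute $\tfrac12[\pi,\pi](\alpha,\beta,\gamma)$ from identity (\ref{poisson-pre-alg}), namely $\gamma(\sharp_\pi([\alpha,\beta]_\pi))-\gamma([\sharp_\pi(\alpha),\sharp_\pi(\beta)])$, expanding $\gamma(\sharp_\pi([\alpha,\beta]_\pi))=-[\alpha,\beta]_\pi(Z)$ via the definition $[\alpha,\beta]_\pi=\mathcal{L}_X\beta-\mathcal{L}_Y\alpha-d(\pi(\alpha,\beta))$. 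After substitution I expect both $d\omega(X,Y,Z)$ and $\tfrac12[\pi,\pi](\alpha,\beta,\gamma)$ to collapse to the same six terms $X(\omega(Y,Z))+Y(\omega(Z,X))+Z(\omega(X,Y))+\omega([X,Z],Y)-\omega([Y,Z],X)-\omega([X,Y],Z)$, giving $d\omega(X,Y,Z)=\tfrac12[\pi,\pi](\alpha,\beta,\gamma)$. It then remains to match the lower-order pieces: using $\theta(X)=-\alpha(\xi)$ one checks $(\theta\wedge\omega)(X,Y,Z)=-\big(\alpha(\xi)\pi(\beta,\gamma)+\beta(\xi)\pi(\gamma,\alpha)+\gamma(\xi)\pi(\alpha,\beta)\big)=-(\xi\wedge\pi)(\alpha,\beta,\gamma)$, and adding this to the previous identity yields (1).

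For (2) I would argue that the mutual inverseness of $\omega$ and $\pi$ forces their Lie derivatives along $\xi$ to correspond with a sign. Concretely, I would differentiate $\flat_\omega\circ\sharp_\pi=\mathrm{id}_{T^\ast M}$ by $\mathcal{L}_\xi$ through the Leibniz rule $(\mathcal{L}_\xi\flat_\omega)\circ\sharp_\pi+\flat_\omega\circ(\mathcal{L}_\xi\sharp_\pi)=0$, after recording the two elementary facts $(\mathcal{L}_\xi\flat_\omega)(X)(W)=-(\mathcal{L}_\xi\omega)(X,W)$ and $\beta\big((\mathcal{L}_\xi\sharp_\pi)(\alpha)\big)=(\mathcal{L}_\xi\pi)(\alpha,\beta)$ (the latter because the cross terms $(\mathcal{L}_\xi\beta)(\sharp_\pi\alpha)$ and $\pi(\alpha,\mathcal{L}_\xi\beta)$ are equal and cancel). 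Applying the resulting relation to $\alpha$ and then to the vector $Y=\sharp_\pi(\beta)$ gives $-(\mathcal{L}_\xi\omega)(X,Y)-(\mathcal{L}_\xi\pi)(\alpha,\beta)=0$, which is exactly (2).

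The main obstacle I anticipate is the sign bookkeeping in (1): one must keep the conventions $\flat_\omega(X)=-i_X\omega$, $\pi(\alpha,\beta)=\beta(\sharp_\pi\alpha)$, and the normalisation of $\xi\wedge\pi$ (the one for which (\ref{jacobi}) reads $[\pi,\pi]=2\xi\wedge\pi$) mutually consistent, so that the $d\omega$ side and the $\tfrac12[\pi,\pi]$ side land on precisely the same six-term expression and the $\theta\wedge\omega$ and $\xi\wedge\pi$ contributions cancel rather than add. Part (2) should be routine once the Leibniz rule for $\mathcal{L}_\xi$ of a composition of bundle maps is in place.
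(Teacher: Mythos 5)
Your proposal is correct and follows essentially the same route as the paper: part (1) rests on identity (\ref{poisson-pre-alg}) together with the dictionary $\pi(\alpha,\beta)=\omega(X,Y)$, $\theta(X)=-\alpha(\xi)$, and the cancellation $\theta\wedge\omega(X,Y,Z)=-\xi\wedge\pi(\alpha,\beta,\gamma)$, exactly as in the paper's proof. For part (2) your differentiation of $\flat_\omega\circ\sharp_\pi=\mathrm{id}$ via the Leibniz rule is just a slightly more systematic repackaging of the paper's direct computation of $\pi(\mathcal{L}_\xi\alpha,\beta)$, and all the signs check out under the paper's conventions.
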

\begin{proof} En utilisant l'identit\'e $\pi(\alpha,\beta)=\omega(X,Y)$ et l'identit\'e (\ref{poisson-pre-alg}), il vient 
$$
\omega([X,Y],Z)=\gamma([X,Y])=-\frac{1}{2}\left[\pi,\pi\right](\alpha,\beta,\gamma)+\pi(\left[\alpha,\beta\right]_\pi,\gamma),
$$
d'o\`u, avec un calcul direct, l'on d\'eduit  
\begin{equation}\label{domega}
d\omega(X,Y,Z)=\frac{1}{2}\left[\pi,\pi\right](\alpha,\beta,\gamma).
\end{equation}
Par ailleurs, remarquons que
$\theta(X)=-i_\xi\omega(X)=i_X\omega(\xi)=i_{\sharp_\pi(\alpha)}\omega(\xi)=-\alpha(
\xi)$, de m\^eme $\theta(Y)=-\beta(\xi)$ et
$\theta(Z)=-\gamma(\xi)$, d'o\`u $\theta\wedge\omega(X,Y,Z)=-\xi\wedge\pi(\alpha,\beta,\gamma)$.
D'o\`u, avec (\ref{domega}), la premi\`ere assertion du lemme. Pour la deuxi\`eme assertion, il suffit de remarquer qu'on a 
$$
\pi(\mathcal{L}_\xi\alpha,\beta)=-\mathcal{L}_\xi\alpha(Y)
=-\xi(\alpha(Y))+\alpha(\mathcal{L}_{\xi}Y)=\xi(\omega(X,Y))-\omega(X,\mathcal{L}_\xi
Y).
$$
\end{proof}

\begin{prop}\label{loc-conf-sympl-jac}
Le couple $(\omega,\theta)$ est une structure localement conform\'ement symplectique si et seulement si le couple $(\pi,\xi)$ est une structure de Jacobi. 
\end{prop}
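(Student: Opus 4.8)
Le point de départ est d'observer que, puisque $\omega$ est non dégénérée, l'application $\sharp_\pi$ coïncide avec $\sharp_\omega=\flat_\omega^{-1}$ et est donc un isomorphisme de fibrés vectoriels. En effet, la relation $i_{\sharp_\pi(\alpha)}\omega=-\alpha$ se réécrit $\flat_\omega(\sharp_\pi(\alpha))=\alpha$, d'où $\sharp_\pi=\sharp_\omega$. Par conséquent, lorsque $\alpha,\beta,\gamma$ parcourent $\Omega^1(M)$, les champs $X=\sharp_\pi(\alpha)$, $Y=\sharp_\pi(\beta)$, $Z=\sharp_\pi(\gamma)$ parcourent tout $\mathfrak{X}(M)$, de sorte que les deux identités du lemme \ref{omega-theta-pi-xi} deviennent des équivalences entre identités tensorielles et non plus de simples égalités évaluées sur des champs particuliers.

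De là, la première assertion du lemme \ref{omega-theta-pi-xi} donne directement l'équivalence $d\omega+\theta\wedge\omega=0 \Longleftrightarrow [\pi,\pi]=2\xi\wedge\pi$, et la seconde donne l'équivalence $\mathcal{L}_\xi\omega=0 \Longleftrightarrow \mathcal{L}_\xi\pi=0$. La condition de Jacobi (\ref{jacobi}) et la condition localement conformément symplectique se décomposent ainsi chacune en deux morceaux, et il ne reste plus qu'à relier la fermeture $d\theta=0$ à la condition $\mathcal{L}_\xi\omega=0$, ce qui constitue le point délicat de la preuve : la première équivalence n'épuise pas à elle seule l'hypothèse LCS, puisque celle-ci exige aussi $d\theta=0$.

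Pour établir ce lien, je partirais de la formule de Cartan $\mathcal{L}_\xi\omega=d(i_\xi\omega)+i_\xi d\omega=-d\theta+i_\xi d\omega$, en utilisant $i_\xi\omega=-\theta$. Sous l'hypothèse $d\omega=-\theta\wedge\omega$, on calcule $i_\xi d\omega=-i_\xi(\theta\wedge\omega)=-\theta(\xi)\,\omega+\theta\wedge i_\xi\omega$. Or $\theta(\xi)=-\omega(\xi,\xi)=0$ et $\theta\wedge i_\xi\omega=-\theta\wedge\theta=0$, d'où $i_\xi d\omega=0$ et finalement $\mathcal{L}_\xi\omega=-d\theta$, pourvu que $d\omega+\theta\wedge\omega=0$. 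C'est cette identité qui fait le pont attendu, et c'est là l'obstacle principal : il faut remarquer que $\xi=\sharp_\omega(\theta)$ est isotrope pour $\omega$ et que $\theta\wedge\theta=0$.

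Il resterait à assembler ces faits dans les deux sens. Si $(\omega,\theta)$ est localement conformément symplectique, alors $d\omega+\theta\wedge\omega=0$ donne $[\pi,\pi]=2\xi\wedge\pi$, et comme l'identité précédente fournit $\mathcal{L}_\xi\omega=-d\theta=0$, la seconde équivalence donne $\mathcal{L}_\xi\pi=0$ : le couple $(\pi,\xi)$ est de Jacobi. Réciproquement, si $(\pi,\xi)$ est de Jacobi, la première équivalence fournit d'abord $d\omega+\theta\wedge\omega=0$, ce qui légitime l'emploi de l'identité $\mathcal{L}_\xi\omega=-d\theta$ ; comme $\mathcal{L}_\xi\pi=0$ entraîne $\mathcal{L}_\xi\omega=0$ par la seconde équivalence, on en déduit $d\theta=0$, et $(\omega,\theta)$ est localement conformément symplectique. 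Les deux équivalences étant livrées clé en main par le lemme \ref{omega-theta-pi-xi}, c'est bien le calcul de Cartan ci-dessus qui concentre toute la difficulté.
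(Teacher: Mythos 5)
Votre preuve est correcte et suit essentiellement la même démarche que celle du papier : l'assertion 1 du lemme \ref{omega-theta-pi-xi} fournit l'équivalence $d\omega+\theta\wedge\omega=0\Leftrightarrow[\pi,\pi]=2\xi\wedge\pi$, puis la formule de Cartan combinée à $\theta(\xi)=0$ et $\theta\wedge\theta=0$ donne $\mathcal{L}_\xi\omega=-d\theta$ sous cette hypothèse, et l'assertion 2 conclut. Vous explicitez seulement davantage la surjectivité de $\sharp_\pi$ et l'assemblage des deux sens, ce qui ne change rien au fond de l'argument.
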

\begin{proof}
De l'assertion 1. du lemme \ref{omega-theta-pi-xi} on d\'eduit que l'identi\'e
$d\omega+\theta \wedge \omega =0$ est satisfaite si et seulement
l'identit\'e $\left[ \pi ,\pi \right] =2\xi \wedge \pi$ l'est, et si
l'une des deux est satisfaite alors, en utilisant la formule de
Cartan, on obtient que
$$
\mathcal{L}_\xi\omega  =d(i_\xi \omega)+i_\xi d\omega 
=-d\theta - i_\xi(\theta\wedge\omega) 
=-d\theta,
$$
donc, avec l'assertion 2. du lemme \ref{omega-theta-pi-xi},  que
$\mathcal{L}_\xi\pi=0$ si et seulement si $d\theta=0$.
\end{proof}

\begin{prop}\label{omega-theta-alg}
Supposons que $(M,\omega,\theta)$ est une vari\'et\'e localement conform\'ement
symplectique et soit $(\pi,\xi)$ la structure de Jacobi associ\'ee. 
L'alg\'ebro\"ide altern\'e 
$(T^{\ast}M,\sharp_{\pi,\xi},\left[ .,.\right]_{\pi,\xi }^{\theta})$ est un
alg\'ebro\"{\i}de de Lie isomorphe \`a l'alg\'ebro\"{\i}de tangent
de $M$.
\end{prop}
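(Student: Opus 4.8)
The plan is to mirror the proof of Proposition 2.2 (the contact case, \ref{eta-alg}). By Corollary \ref{lambda-pre-alg}, since $(\pi,\xi)$ is a Jacobi structure with $\pi\neq 0$ (here $\pi$ is nondegenerate, coming from the nondegenerate $\omega$), the alternating algebroid $(T^\ast M,\sharp_{\pi,\xi},[.,.]_{\pi,\xi}^\theta)$ is a pre-Lie algebroid if and only if $\sharp_{\pi,\xi}(\theta)=\xi$. So the heart of the matter is to verify this single identity; once it holds, the anchor $\sharp_{\pi,\xi}$ is an isomorphism (again because $\pi$ is nondegenerate, so $\sharp_\pi$ is already an isomorphism, and a fortiori $\sharp_{\pi,\xi}$ will be), and a pre-Lie algebroid whose anchor is an isomorphism is automatically a Lie algebroid isomorphic to the tangent algebroid $(TM,\id_M,[.,.])$, as recalled in the text.

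First I would show that $\sharp_{\pi,\xi}$ coincides with $\sharp_\omega$, the inverse of $\flat_\omega$, exactly as in the contact case one identifies $\sharp_{\pi,\xi}$ with $\sharp_\eta$. Let $\alpha,\beta\in\Omega^1(M)$ and write $X=\sharp_\omega(\alpha)$, $Y=\sharp_\omega(\beta)$, so that $\alpha=-i_X\omega=\flat_\omega(X)$ and similarly for $\beta$. From the defining relation $i_{\sharp_\pi(\alpha)}\omega=-\alpha$ one gets $\sharp_\pi=\sharp_\omega$, hence $\pi(\alpha,\beta)=\omega(\sharp_\omega(\alpha),\sharp_\omega(\beta))=\omega(X,Y)$. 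Next, from $\xi=\sharp_\omega(\theta)$ and $i_\xi\omega=-\theta$ I would compute $\alpha(\xi)=\flat_\omega(X)(\xi)=-i_X\omega(\xi)=\omega(\xi,X)=-i_\xi\omega(X)=\theta(X)$, and likewise $\beta(\xi)=\theta(Y)$. Then I expand
\[
\beta(\sharp_{\pi,\xi}(\alpha))=\beta(\sharp_\pi(\alpha))+\alpha(\xi)\beta(\xi)=\pi(\alpha,\beta)+\theta(X)\theta(Y).
\]

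The computation to finish is to recognize the right-hand side as $\beta(\sharp_\omega(\alpha))$, i.e.\ as $\omega(X,Y)$ corrected by the $\xi$-term; concretely $\beta(\sharp_{\pi,\xi}(\alpha))=\omega(X,Y)+\theta(X)\theta(Y)$, and I expect this to equal $\beta(\sharp_\omega(\alpha))$ after accounting for how $\flat_\omega$ and $\flat_\eta$-type corrections interact. Specializing to $\alpha=\theta$ (so $X=\sharp_\omega(\theta)=\xi$) I would then read off $\sharp_{\pi,\xi}(\theta)=\xi$ directly, which is the only identity Corollary \ref{lambda-pre-alg} requires. The main obstacle, and the only genuinely delicate point, is the bookkeeping that shows $\sharp_{\pi,\xi}(\theta)=\xi$ cleanly: one must track the sign conventions in $\flat_\omega(X)=-i_X\omega$ and in $i_\xi\omega=-\theta$ so that the extra rank-one term $\alpha(\xi)\xi$ in the definition of $\sharp_{\pi,\xi}$ reproduces exactly the difference between $\sharp_\pi$ and the full inverse, rather than a stray multiple. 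Everything else is formal: once $\sharp_{\pi,\xi}(\theta)=\xi$ is established, Corollary \ref{lambda-pre-alg} gives the pre-Lie algebroid property, and the nondegeneracy of $\omega$ (hence invertibility of the anchor) upgrades it to a Lie algebroid isomorphic to $(TM,\id_M,[.,.])$, completing the proof.
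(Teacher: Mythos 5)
There is a genuine gap, and it traces back to one missing observation: since $i_{\xi}\omega=-\theta$ and $\omega$ is antisymmetric, $\theta(\xi)=-\omega(\xi,\xi)=0$. First, the identification $\sharp_{\pi,\xi}=\sharp_{\omega}$ that you propose to carry over from the contact case is false here (unless $\xi=0$). Indeed $\flat_{\omega}(X)=-i_{X}\omega$ carries no correction term analogous to the $\eta(X)\eta$ in $\flat_{\eta}$, so one already has $\sharp_{\pi}=\sharp_{\omega}$, and hence $\sharp_{\pi,\xi}(\alpha)=\sharp_{\omega}(\alpha)+\alpha(\xi)\xi$ differs from $\sharp_{\omega}(\alpha)$ by a nonzero rank-one term. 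Your own computation exhibits the obstruction: $\beta(\sharp_{\pi,\xi}(\alpha))=\omega(X,Y)+\theta(X)\theta(Y)=\beta(\sharp_{\omega}(\alpha))+\theta(X)\theta(Y)$, and the extra term does not vanish in general; no bookkeeping of sign conventions will make it disappear. What does survive is exactly the special case you need: $\sharp_{\pi,\xi}(\theta)=\sharp_{\pi}(\theta)+\theta(\xi)\xi=\xi+0=\xi$, which is how the paper argues, in one line, without any claim that $\sharp_{\pi,\xi}=\sharp_{\omega}$.

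Second, the assertion that $\sharp_{\pi,\xi}$ is \emph{a fortiori} an isomorphism because $\sharp_{\pi}$ is one is not valid reasoning: a rank-one perturbation of an isomorphism need not be invertible (the map $\alpha\mapsto\alpha-\alpha(\xi)\theta$ kills $\theta$ whenever $\theta(\xi)=1$). Writing $\sharp_{\pi,\xi}(\alpha)=\sharp_{\pi}\bigl(\alpha+\alpha(\xi)\theta\bigr)$, invertibility of $\sharp_{\pi,\xi}$ is equivalent to invertibility of $\alpha\mapsto\alpha+\alpha(\xi)\theta$, which holds precisely because $1+\theta(\xi)=1\neq 0$. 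The paper supplies this step explicitly: if $\sharp_{\pi,\xi}(\alpha)=0$ then $\alpha=-\alpha(\xi)\theta$, hence $\alpha(\xi)=-\alpha(\xi)\theta(\xi)=0$ and so $\alpha=0$. Without this (or an equivalent) argument, the upgrade from pr\'ealg\'ebro\"ide de Lie to alg\'ebro\"ide de Lie isomorphe \`a $(TM,\mathrm{Id}_{M},[.,.])$ is not justified. Your overall strategy (verify $\sharp_{\pi,\xi}(\theta)=\xi$, invoke le corollaire \ref{lambda-pre-alg}, then check the anchor is invertible) is the right one and matches the paper's; it is these two intermediate claims that need to be repaired.
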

\begin{proof}
Comme $\sharp_{\pi,\xi}(\theta)=\sharp_\pi(\theta)+\theta(\xi)\xi=\sharp_\pi(\theta)=\xi$. Alors, d'apr\`es le corollaire \ref{lambda-pre-alg}, le triplet $(T^{\ast}M,\sharp_{\pi ,\xi},\left[ .,.\right]_{\pi,\xi
}^{\theta})$ est un pr\'ealg\'ebro\"{\i}de de Lie. Il reste \`a montrer que $\sharp_{\pi,\xi}$ est un isomorphisme. 
Il suffit de montrer qu'il est injectif. Comme on a $\sharp_{\pi,\xi}(\alpha)=\sharp_\pi(\alpha+\alpha(\xi)\theta)$ et le champ $\pi$ est non d\'eg\'en\'er\'e, alors $\sharp_{\pi,\xi}(\alpha)=0$ entra\^ine $\alpha=-\alpha(\xi)\theta$, donc $\alpha(\xi)=-\alpha(\xi)\theta(\xi)=0$, donc $\alpha=0$. 
\end{proof}

D'o\`u, si $(M,\omega,\theta)$ est une vari\'et\'e localement
conform\'ement symplectique et si $(\pi,\xi)$ est la structure de
Jacobi associ\'ee, d'apr\`es la proposition ci-dessus, si on pose
$\sharp_{\omega,\theta}:=\sharp_{\pi,\xi}$
et $\left[ .,.\right]_{\omega,\theta}:=\left[
.,.\right]_{\pi,\xi}^\theta$, alors on a un alg\'ebro\"ide de Lie
$(T^\ast M, \sharp_{\omega,\theta},\left[
.,.\right]_{\omega,\theta})$ associ\'e naturellement \`a la
vari\'et\'e localement conform\'ement symplectique
$(M,\omega,\theta)$. On pourra l'appeler l'alg\'ebro\"ide cotangent
de la vari\'et\'e localement conform\'ement symplectique
$(M,\omega,\theta)$.

\section{D\'eriv\'ee de Levi-Civita contravariante associ\'ee au triplet
$(\pi,\xi,g)$}

\subsection{D\'efinition et propri\'et\'es}

Dans toute la suite, on d\'esigne par $g$ une m\'etrique pseudo-riemannienne sur $M$, par $\flat_g:TM\rightarrow T^\ast M$ l'isomorphisme de fibr\'es vectoriels tel que
$\flat_g(X)(Y)=g(X,Y)$, par $\sharp_{g}$ l'isomorphisme inverse de
$\flat_g$, et par $g^\ast$ la com\'etrique de $g$, c'est-\`a-dire le
champ de tenseurs d\'efini par $
g^\ast(\alpha,\beta) :=g\left( \sharp_{g}(\alpha) ,\sharp_{g}(\beta)
\right)$. 

Au couple $(\pi,g)$ on associe les champs d'endomorphismes $J$ de $TM$ et $J^\ast$ de $T^\ast M$ d\'efinis respectivement par
\begin{equation}\label{J}
g(J\sharp_{g}(\alpha),\sharp_{g}(\beta))=\pi(\alpha,\beta) \quad\textrm{ et }\quad g^\ast(J^\ast \alpha ,\beta) =\pi(\alpha,\beta). 
\end{equation}
On a $J=\sharp_g\circ J^\ast \circ \flat_g$. Au triplet $(\pi,\xi,g)$ on associe la $1$-forme diff\'erentielle
$\lambda$ d\'efinie par
\begin{equation*}
\lambda =g(\xi,\xi)\flat_{g}(\xi)-\flat_g(J\xi),
\end{equation*}
et on note $\left[.,.\right] _{\pi ,\xi }^{g}$ au lieu de $\left[
.,.\right]_{\pi,\xi}^{\lambda}$. 

On appelle la d\'eriv\'ee de Levi-Civita contravariante associ\'ee
au triplet $(\pi,\xi,g)$ d'unique d\'eriv\'ee $\mathcal{D}:
\Omega ^{1}(M)\times \Omega ^{1}(M) \longrightarrow \Omega ^{1}(M)$, 
sym\'etrique par rapport au crochet $\left[.,.\right] _{\pi ,\xi }^{g}$ et compatible avec la m\'etrique. Elle est enti\`erement caract\'eris\'ee par la formule :
\begin{equation}\label{formule-koszul-jacobi}
\begin{array}{lll}
2g^\ast\left( \mathcal{D}_{\alpha }\beta,\gamma \right)
&=&\sharp_{\pi,\xi}(\alpha)\cdot g^\ast(\beta,\gamma)
+\sharp_{\pi,\xi}(\beta)\cdot
g^\ast(\alpha,\gamma)-\sharp_{\pi,\xi}(\gamma)\cdot g^\ast(\alpha,\beta) \\
&&-g^\ast(\left[\beta,\gamma\right]_{\pi,\xi }^{g},\alpha)
-g^\ast(\left[\alpha,\gamma \right]_{\pi,\xi }^{g},\beta)
+g^\ast(\left[\alpha,\beta\right]_{\pi,\xi }^{g},\gamma).
\end{array}
\end{equation}
Dans le cas o\`u $\xi =0$, la d\'eriv\'ee $\mathcal{D}$ n'est rien
d'autre que la d\'eriv\'ee de Levi-Civita contravariante 
associ\'ee dans \cite{boucetta1} au couple $(\pi,g)$.

\begin{prop}\label{Levi-Civita-triplet}
Sopposons que l'alg\'ebro\"ide altern\'e $(T^\ast M,\sharp_{\pi,\xi},\left[
.,.\right] _{\pi ,\xi }^{g})$ est un pr\'ealg\'ebro\"ide de Lie et que l'application ancre $\sharp_{\pi,\xi}$ est une isom\'etrie. Alors
$$
\sharp_{\pi,\xi}\left(\mathcal{D}_\alpha\beta\right)
=\nabla_{\sharp_{\pi,\xi}(\alpha)}\sharp_{\pi,\xi}(\beta).
$$
o\`u $\nabla$ est la connexion de Levi-Civita (covariante)
associ\'ee \`a $g$.
\end{prop}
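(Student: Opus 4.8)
L'idée est que la formule de Koszul (\ref{formule-koszul-jacobi}) caractérisant $\mathcal{D}$ est la transcription terme à terme, via $\sharp_{\pi,\xi}$, de la formule de Koszul classique
\begin{equation*}
2g(\nabla_X Y,Z)=X\cdot g(Y,Z)+Y\cdot g(X,Z)-Z\cdot g(X,Y)-g([Y,Z],X)-g([X,Z],Y)+g([X,Y],Z)
\end{equation*}
qui caractérise la connexion de Levi-Civita $\nabla$ de $g$. On fixera donc des formes arbitraires $\alpha,\beta,\gamma\in\Omega^1(M)$ et on posera $X=\sharp_{\pi,\xi}(\alpha)$, $Y=\sharp_{\pi,\xi}(\beta)$ et $Z=\sharp_{\pi,\xi}(\gamma)$, puis on comparera les deux membres de droite.

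Pour les trois premiers termes, on utilisera l'hypothèse que $\sharp_{\pi,\xi}$ est une isométrie, c'est-à-dire l'égalité de fonctions $g^\ast(\alpha,\beta)=g(X,Y)$ (et de même pour les autres couples). Ceci identifie $\sharp_{\pi,\xi}(\alpha)\cdot g^\ast(\beta,\gamma)$ à $X\cdot g(Y,Z)$, et pareillement pour les deux autres termes dérivés. Pour les trois termes contenant un crochet, on combinera l'isométrie avec l'hypothèse de préalgébroïde de Lie $\sharp_{\pi,\xi}([\alpha,\beta]_{\pi,\xi}^g)=[X,Y]$ : ainsi
\begin{equation*}
g^\ast([\alpha,\beta]_{\pi,\xi}^g,\gamma)=g\left(\sharp_{\pi,\xi}([\alpha,\beta]_{\pi,\xi}^g),\sharp_{\pi,\xi}(\gamma)\right)=g([X,Y],Z),
\end{equation*}
et de même pour les deux crochets restants. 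On en déduira que le membre de droite de (\ref{formule-koszul-jacobi}) coïncide exactement avec celui de la formule de Koszul pour $\nabla$, d'où $g^\ast(\mathcal{D}_\alpha\beta,\gamma)=g(\nabla_X Y,Z)$.

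Il restera à conclure. En réécrivant le membre de gauche au moyen de l'isométrie, $g^\ast(\mathcal{D}_\alpha\beta,\gamma)=g(\sharp_{\pi,\xi}(\mathcal{D}_\alpha\beta),Z)$, on obtiendra $g(\sharp_{\pi,\xi}(\mathcal{D}_\alpha\beta),Z)=g(\nabla_X Y,Z)$. Comme une isométrie entre fibrés de même rang munis de formes non dégénérées est nécessairement un isomorphisme ($\sharp_{\pi,\xi}(\gamma)=0$ entraîne $g^\ast(\gamma,\cdot)=0$, donc $\gamma=0$), le vecteur $Z=\sharp_{\pi,\xi}(\gamma)$ décrira tout $\mathfrak{X}(M)$ lorsque $\gamma$ décrit $\Omega^1(M)$; la non-dégénérescence de $g$ forcera alors $\sharp_{\pi,\xi}(\mathcal{D}_\alpha\beta)=\nabla_X Y=\nabla_{\sharp_{\pi,\xi}(\alpha)}\sharp_{\pi,\xi}(\beta)$, soit l'identité voulue.

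La démonstration est essentiellement formelle : il s'agit d'une identification de deux formules de Koszul, et chacune des deux hypothèses sert précisément à traiter une moitié des six termes. Le seul point réclamant un peu de soin sera de noter que l'hypothèse d'isométrie, bien qu'elle s'exprime ponctuellement, fournit une égalité de fonctions $g^\ast(\alpha,\beta)=g(\sharp_{\pi,\xi}(\alpha),\sharp_{\pi,\xi}(\beta))$ sur $M$ tout entière, ce qui légitime le remplacement de $g^\ast$ par $g$ à l'intérieur même des termes où ces fonctions sont dérivées par les champs $X$, $Y$, $Z$.
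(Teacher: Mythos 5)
Votre démonstration est correcte et suit exactement la même démarche que celle du papier : identification terme à terme des deux formules de Koszul, l'isométrie traitant les termes dérivés et l'hypothèse de préalgébroïde de Lie les termes en crochets, puis conclusion par non-dégénérescence. Vous explicitez en outre, à juste titre, que l'isométrie entraîne que $\sharp_{\pi,\xi}$ est un isomorphisme, point que le papier laisse implicite.
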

\begin{proof}
Puisque on a suppos\'e que $(T^\ast M,\sharp_{\pi,\xi},\left[
.,.\right] _{\pi ,\xi }^{g})$ est un pr\'ealg\'ebro\"ide de Lie, on a  
$$
\sharp_{\pi,\xi}(\left[\alpha,\beta\right]_{\pi,\xi}^g)=\left[\sharp_{\pi,\xi}(\alpha),\sharp_{\pi,\xi}(\beta)\right],
$$
quelles que soient les formes $\alpha,\beta \in \Omega^1(M)$. Comme
on a suppos\'e aussi que $\sharp_{\pi,\xi}$ est une isom\'etrie, de la
formule (\ref{formule-koszul-jacobi}) et de la formule de Koszul relative
\`a la connexion de Levi-Civita $\nabla$ de $g$ on d\'eduit que 
$$
g^\ast\left( \sharp_{\pi,\xi}(\mathcal{D}_{\alpha
}\beta),\sharp_{\pi,\xi}(\gamma) \right)= g\left(
\nabla_{\sharp_{\pi,\xi}(\alpha)
}\sharp_{\pi,\xi}(\beta),\sharp_{\pi,\xi}(\gamma) \right)
$$
quelles que soient les formes $\alpha,\beta,\gamma \in \Omega^1(M)$.
\end{proof}

\subsection{Alg\'ebro\"ide altern\'e associ\'e \`a une vari\'et\'e riemannienne presque de contact}

Soit $(\Phi,\xi,\eta)$ un triplet compos\'e d'une
$1$-forme $\eta$, d'un champ de vecteurs $\xi$ et d'un champ de
$(1,1)$-tenseurs $\Phi$ sur $M$. Le triplet $(\Phi,\xi,\eta)$ d\'efinit une structure presque de contact sur $M$ si $\Phi^{2}=-\textrm{Id}_{TM}+\eta \otimes \xi$ et $\eta(\xi)=1$. Il en r\'esulte, voir par exemple \cite[Th. 4.1]{blair}, que $\Phi(\xi)=0$ et $\eta \circ \Phi =0$.

On dit que la m\'etrique $g$ est associ\'ee au triplet $(\Phi,\xi,\eta)$ si l'identit\'e suivante est v\'erifi\'ee 
\begin{equation}\label{met-ass-presque-cont}
g\left(\Phi(X),\Phi(Y)\right)=g(X,Y)-\eta(X)\eta(Y).
\end{equation}
On dit que la vari\'et\'e $(M,\Phi,\xi,\eta,g)$ est une vari\'et\'e pseudo-riemannienne presque de contact si le triplet $(\Phi,\xi,\eta)$ est une structure presque de contact et que $g$ est une m\'etrique associ\'ee. Si de plus la m\'etrique $g$ est d\'efinie positive, on dit que $(M,\Phi,\xi,\eta,g)$ est une
vari\'et\'e riemannienne presque de contact. Remarquons que si on
met $Y=\xi$ dans la formule (\ref{met-ass-presque-cont}), on
d\'eduit que si $(\Phi,\xi,\eta,g)$ est une structure pseudo-riemannienne presque de contact alors
\begin{equation*}
g(X,\xi)=\eta(X),
\end{equation*}
pour tout $X\in \mathfrak{X}(M)$, i.e. $\flat_g(\xi)=\eta$. En particulier,
$g(\xi,\xi)=1$.

\begin{prop}\label{pi-ass-presque-contact}
Supposons que $(\Phi,\xi,\eta,g)$ est une structure pseudo-riemannienne
presque de contact sur $M$. L'application $\pi:\Omega^{1}(M)\times
\Omega^{1}(M)\rightarrow C^\infty(M)$ d\'efinie par
$$
\pi(\alpha,\beta)=g(\sharp_g(\alpha),\Phi(\sharp_g(\beta)))
$$
est un champ de bivecteurs sur $M$ et le morphisme de fibr\'es
$\sharp_{\pi,\xi}$ est une isom\'etrie.
\end{prop}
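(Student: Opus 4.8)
The plan is to establish the two assertions separately: first that $\pi$ is alternating (hence genuinely a bivector field), and then that $\sharp_{\pi,\xi}$ preserves the metric. The crux of both is a single observation, namely that $\Phi$ is skew-symmetric with respect to $g$. First I would prove this skew-symmetry. Replacing $Y$ by $\Phi(Y)$ in the compatibility identity \eqref{met-ass-presque-cont} and using $\Phi^{2}=-\textrm{Id}_{TM}+\eta\otimes\xi$ together with $\eta\circ\Phi=0$ and $g(\Phi(X),\xi)=\eta(\Phi(X))=0$ (the latter following from $\flat_g(\xi)=\eta$), one is led to $g(X,\Phi(Y))=-g(\Phi(X),Y)$ for all $X,Y\in\mathfrak{X}(M)$. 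Writing $X=\sharp_g(\alpha)$ and $Y=\sharp_g(\beta)$, this immediately gives $\pi(\alpha,\beta)=g(\sharp_g(\alpha),\Phi(\sharp_g(\beta)))=-\pi(\beta,\alpha)$, so $\pi$ is antisymmetric and therefore defines a bivector field.

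Next I would compute the raising maps explicitly. Using the skew-symmetry just obtained and the identity $g(Z,\sharp_g(\beta))=\beta(Z)$, I would rewrite $\pi(\alpha,\beta)=-\beta(\Phi(\sharp_g(\alpha)))$; comparing this with the defining relation $\beta(\sharp_\pi(\alpha))=\pi(\alpha,\beta)$ yields $\sharp_\pi(\alpha)=-\Phi(\sharp_g(\alpha))$. Since $\alpha(\xi)=g(\sharp_g(\alpha),\xi)=\eta(\sharp_g(\alpha))$, the anchor takes the closed form $\sharp_{\pi,\xi}(\alpha)=-\Phi(\sharp_g(\alpha))+\eta(\sharp_g(\alpha))\,\xi$, which is the expression I would carry into the final step.

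Finally, to show $\sharp_{\pi,\xi}$ is an isometry, i.e. $g(\sharp_{\pi,\xi}(\alpha),\sharp_{\pi,\xi}(\beta))=g^\ast(\alpha,\beta)$, I would expand $g(\sharp_{\pi,\xi}(\alpha),\sharp_{\pi,\xi}(\beta))$ with $X=\sharp_g(\alpha)$ and $Y=\sharp_g(\beta)$. The two cross terms involve $g(\Phi(X),\xi)$ and $g(\xi,\Phi(Y))$, which vanish because $\eta\circ\Phi=0$; the diagonal term is $\eta(X)\eta(Y)\,g(\xi,\xi)=\eta(X)\eta(Y)$ since $g(\xi,\xi)=1$; and the leading term is $g(\Phi(X),\Phi(Y))=g(X,Y)-\eta(X)\eta(Y)$ by \eqref{met-ass-presque-cont}. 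Adding these gives $g(X,Y)=g^\ast(\alpha,\beta)$, which is precisely the isometry condition. I expect the only genuinely delicate point to be the skew-symmetry of $\Phi$ established at the outset: everything after it is bookkeeping, but that single identity is simultaneously what forces $\pi$ to be alternating and what makes the unwanted cross terms in the final expansion disappear, so I would isolate and prove it cleanly before anything else.
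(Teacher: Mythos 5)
Your proof is correct and follows essentially the same route as the paper: the same use of the compatibility identity together with $\Phi^{2}=-\mathrm{Id}_{TM}+\eta\otimes\xi$ and $\eta\circ\Phi=0$ to get antisymmetry, the same closed form $\sharp_{\pi,\xi}(\alpha)=-\Phi(\sharp_g(\alpha))+\eta(\sharp_g(\alpha))\xi$, and the same final expansion. The only (welcome) organizational difference is that you isolate the full skew-symmetry $g(X,\Phi(Y))=-g(\Phi(X),Y)$ up front, whereas the paper only verifies the diagonal identity $\pi(\alpha,\alpha)=-\pi(\alpha,\alpha)$ and then uses the polarized version implicitly when computing $\sharp_\pi$.
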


\begin{proof}
Soit $\alpha\in\Omega^1(M)$ et posons $X=\sharp_g(\alpha)$. En
utilisant (\ref{met-ass-presque-cont}) et $\eta\circ\Phi=0$, il
vient que $\pi(\alpha,\alpha)= g(\Phi(X),\Phi^2(X))$, 
et comme $\Phi^2=-\textrm{Id}_{TM}+\eta\otimes\xi$ et que 
$g(\Phi(X),\xi)=\eta\circ\Phi(X)=0$, il vient que  
$$
\pi(\alpha,\alpha)=-g(\Phi(X),X)+\eta(X)g(\Phi(X),\xi) =
-g(\Phi(X),X) = -\pi(\alpha,\alpha),
 $$
et donc, que $\pi$ est un champ de bivecteurs.
Montrons que $\sharp_{\pi,\xi}$ est une isom\'etrie.
 Soit $\alpha\in \Omega^1(M)$.
 Rappelons que par d\'efinition, on a
 $\sharp_{\pi,\xi}(\alpha)=\sharp_\pi(\alpha)+\alpha(\xi)\xi$.
 Comme on a d'un c\^ot\'e
 $\alpha(\xi)=g(\sharp_g(\alpha),\xi)=\eta(\sharp_g(\alpha))$ et
 d'un autre, pour tout $\beta\in \Omega^1(M)$,
 $$
\beta(\sharp_\pi(\alpha))=\pi(\alpha,\beta)=g(\sharp_g(\alpha),\Phi(\sharp_g(\beta)))
=-g(\Phi(\sharp_g(\alpha)),\sharp_g(\beta))=-\beta(\Phi(\sharp_g(\alpha))),
 $$
c'est-\`a-dire $\sharp_\pi(\alpha)=-\Phi(\sharp_g(\alpha))$, on
d\'eduit que
\begin{equation}\label{f1}
\sharp_{\pi,\xi}(\alpha)=-\Phi(\sharp_g(\alpha))+\eta(\sharp_g(\alpha))\xi.
\end{equation}
Soient $\alpha,\beta\in \Omega^1(M)$. De la formule (\ref{f1}) et du
fait qu'on a $g(\Phi(X),\xi)=\eta\circ\Phi(X)=0$ et $g(\xi,\xi)=1$,
on d\'eduit que
$$
g(\sharp_{\pi,\xi}(\alpha),\sharp_{\pi,\xi}(\beta))
=g(\Phi(\sharp_g(\alpha)),\Phi(\sharp_g(\beta)))
+\eta(\sharp_g(\alpha))\eta(\sharp_g(\beta)).
$$
En utilisant la formule (\ref{met-ass-presque-cont}), on obtient $
g(\sharp_{\pi,\xi}(\alpha),\sharp_{\pi,\xi}(\beta))
=g(\sharp_g(\alpha),\sharp_g(\beta))=g^\ast(\alpha,\beta). $
\end{proof}

\begin{cor}\label{riem-presque-contact-levi-civita}
Supposons que $(\Phi,\xi,\eta,g)$ est une structure pseudo-riemannienne
presque de contact sur $M$ et soit $\pi$ le champ de bivecteurs associ\'e,
c'est-\`a-dire d\'efini dans la proposition
\ref{pi-ass-presque-contact}. Si l'alg\'ebro\"ide altern\'e $(T^\ast M,\sharp_{\pi,\xi},\left[.,.\right] _{\pi ,\xi }^{g})$ est un pr\'ealg\'ebro\"ide de Lie, alors
\begin{equation*}
\sharp_{\pi,\xi}\left( \mathcal{D}_{\alpha }\beta \right)
=\mathcal{\nabla}_{\sharp_{\pi,\xi}(\alpha)}\sharp_{\pi,\xi}(\beta),
\end{equation*}
pour tous $\alpha ,\beta \in \Omega ^{1}(M)$.
\end{cor}
\begin{proof}
C'est une cons\'equence directe des propositions 
\ref{pi-ass-presque-contact} et \ref{Levi-Civita-triplet}.
\end{proof}

Supposons que $\eta$ est une forme de contact sur $M$. La vari\'et\'e $(M,\eta,g)$ est dite pseudo-riemannienne de contact, ou que $g$ est associ\'ee \`a la forme de contact $\eta$, s'il existe un champ d'endomorphismes $\Phi$ de $TM$ tel que
$(\Phi,\xi,\eta,g)$ est une structure pseudo-riemannienne presque de
contact et que
\begin{equation}\label{met-ass-cont}
g(X,\Phi(Y))=d\eta(X,Y).
\end{equation}
Si de plus $g$ est d\'efinie positive, on dit que
$(M,\eta,g)$ est une vari\'et\'e riemannienne de contact.

\begin{theo}\label{riemannienne-contact-levi-civita}
Supposons que $(M,\eta,g)$ est une vari\'et\'e pseudo-riemannienne de contact. On a
\begin{equation*}
\sharp_{\eta}\left( \mathcal{D}_{\alpha }\beta \right)
=\mathcal{\nabla}_{\sharp_{\eta}(\alpha)}\sharp_{\eta}(\beta),
\end{equation*}
pour tous $\alpha ,\beta \in \Omega ^{1}(M)$.
\end{theo}
\begin{proof}
Supposons donc $(M,\eta,g)$ pseudo-riemannienne de contact et 
soit $(\Phi,\xi,\eta,g)$ la structure pseudo-riemannienne presque de
contact associ\'ee.  Soit $(\pi,\xi)$ la structure de Jacobi
associ\'ee \`a $\eta$, alors $\sharp_\eta=\sharp_{\pi,\xi}$. D'apr\`es la proposition \ref{eta-alg} et le corollaire ci-dessus, il nous suffit de montrer que $\pi$ est associ\'e \`a $(\Phi,\xi,\eta,g)$ et que $\eta=\lambda$. Soit  $\alpha \in
\Omega^1(M)$ et posons $X=\sharp_\eta(\alpha)$. En utilisant (\ref{met-ass-cont}),  on a
$$
\sharp_g(\alpha)=\sharp_g(\flat_\eta(X))=-\sharp_g(i_X
d\eta)+\eta(X)\xi = \Phi(X)+\eta(X)\xi. 
$$
D'o\`u, en appliquant $\Phi$, 
$$
\Phi(\sharp_g(\alpha))=\Phi^2(X)=-X+\eta(X)\xi=-\sharp_\eta(\alpha)+\alpha(\xi)\xi=-\sharp_\pi(\alpha). 
$$
D'o\`u l'on d\'eduit que $\pi(\alpha,\beta)=g(\sharp_g(\alpha),\Phi(\sharp_g(\beta)))$ pour
tous $\alpha,\beta\in \Omega^1(M)$, et que $\Phi=-J$, o\`u $J$ est le champ d'endomorphismes associ\'e au couple $(\pi,g)$. Ainsi, $J\xi=0$, et comme $g(\xi,\xi)=1$, il vient que $\lambda=\flat_g(\xi)=\eta$.
\end{proof}

\subsection{M\'etrique riemannienne associ\'ee \`a une structure localement conform\'ement symplectique}

Supposons que $\omega\in \Omega^2(M)$ est une $2$-forme non d\'eg\'en\'er\'ee et
soit $\theta\in\Omega^1(M)$. Supposons que le couple $(\pi,\xi)$ est associ\'e au couple $(\omega,\theta)$. On dit que la m\'etrique
pseudo-riemmannienne $g$ est associ\'ee au couple
$(\omega,\theta)$ si $\sharp_{\omega,\theta}:=\sharp_{\pi,\xi}$ est une isom\'etrie, c'est-\`a-dire si 
\begin{equation}\label{met-ass-loc-conf-symp}
g\left(\sharp_{\omega,\theta}(\alpha),\sharp_{\omega,\theta}(\beta)\right)
=g^\ast(\alpha,\beta),
\end{equation}
pour tous $\alpha ,\beta \in \Omega ^{1}(M)$.

Si $\theta =0$, alors $\xi=0$ et
$\sharp_{\omega,\theta}=\sharp_{\omega}$, et si $J$ et $J^\ast$ sont les champs d'endomorphismes d\'efinis par les
formules (\ref{J}), alors 
$$
g(\sharp_{\omega,\theta}(\alpha),\sharp_{\omega,\theta}(\beta))=g(\sharp_{\omega}(\alpha),\sharp_{\omega}(\beta))=g^\ast(\flat_g(\sharp_{\omega}(\alpha)),\flat_g(\sharp_{\omega}(\beta)))=g^\ast(J^\ast\alpha,J^\ast\beta),
$$
pour tous $\alpha ,\beta \in \Omega ^{1}(M)$. Ainsi, dans le cas
$\theta=0$, la relation (\ref{met-ass-loc-conf-symp}) est \'equivalente \`a
\begin{equation*}
g^\ast\left( J^\ast\alpha ,J^\ast\beta \right) =
g^\ast(\alpha,\beta).
\end{equation*}
Si de plus $g$ est d\'efinie positive, cette derni\`ere identit\'e signifie que le couple $(\omega,g)$ est une structure presque hermitienne sur $M$ et que $J$ est la  structure presque complexe associ\'ee, c'est-\`a-dire, on a 
$$
g(JX,JY)=g(X,Y) \qquad\textrm{ et }\qquad \omega(X,Y)=g(JX,Y), 
$$
pour tous $X,Y\in\mathfrak{X}(M)$.

\begin{theo}\label{loc-conf-symp-met-ass-levi-civita}
Supposons que $(\omega,\theta)$ est une structure localement conform\'ement symplectique et que $g$ est une m\'etrique associ\'ee. On a 
$$
\sharp_{\omega,\theta}\left(\mathcal{D}_\alpha\beta\right)=\nabla_{\sharp_{\omega,\theta}(\alpha)}\sharp_{\omega,\theta}(\beta)
$$ 
pour tous $\alpha,\beta\in \Omega^1(M)$. 
\end{theo} 
\begin{proof}
D'apr\`es les propositions \ref{Levi-Civita-triplet} et \ref{omega-theta-alg}, il suffit de montrer que $\lambda=\theta$. D'un c\^ot\'e, on a $
\sharp_{\pi,\xi}(\theta)=\xi$. D'un autre c\^ot\'e, pour tout  $\alpha\in\Omega^1(M)$, on a
$$
\begin{array}{ll}
g(\sharp_{\pi,\xi}(\lambda),\sharp_{\pi,\xi}(\alpha))&
=g(\sharp_g(\lambda),\sharp_g(\alpha))\\
 & =g(\xi,\xi)\alpha(\xi)+g(\xi,J\sharp_g(\alpha)) \\
 & =g(\xi,\xi)\alpha(\xi)+g(\xi,\sharp_\pi(\alpha)) \\
 & =g(\xi,\sharp_{\pi,\xi}(\alpha)).
\end{array}
$$
Comme $\sharp_{\pi,\xi}$ est une isom\'etrie, donc un isomorphisme,
alors $\sharp_{\pi,\xi}(\lambda)=\xi$. 
\end{proof}

\begin{cor}\label{levi-civita-omega}
Sous les m\^emes hypoth\`eses que le th\'eor\`eme ci-dessus. On a 
$$
\mathcal{D}\pi(\alpha,\beta,\gamma)=\nabla \omega(\sharp_{\omega,\theta}(\alpha),\sharp_{\omega,\theta}(\beta),\sharp_{\omega,\theta}(\gamma)).
$$
\end{cor}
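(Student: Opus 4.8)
The plan is to reduce the contravariant derivative $\mathcal{D}\pi$ of the bivector $\pi$ to the Levi--Civita covariant derivative $\nabla\omega$ of the $2$-form $\omega$, using the intertwining relation of Theorem \ref{loc-conf-symp-met-ass-levi-civita}, once a compatibility identity between $\pi$ and $\omega$ through the common anchor $\sharp_{\omega,\theta}=\sharp_{\pi,\xi}$ is available. The decisive first step is to establish
$$
\pi(\beta,\gamma)=\omega\bigl(\sharp_{\omega,\theta}(\beta),\sharp_{\omega,\theta}(\gamma)\bigr),\qquad \beta,\gamma\in\Omega^1(M).
$$
From $\flat_\omega(X)=-i_X\omega$ one gets $\omega(X,\sharp_\omega(\delta))=\delta(X)$ for every $\delta\in\Omega^1(M)$, so the defining relation $\pi(\alpha,\beta)=\omega(\sharp_\omega(\alpha),\sharp_\omega(\beta))$ becomes $\pi(\alpha,\beta)=\beta(\sharp_\omega(\alpha))$; comparing with $\pi(\alpha,\beta)=\beta(\sharp_\pi(\alpha))$ yields $\sharp_\pi=\sharp_\omega$. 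Writing $\sharp_{\omega,\theta}(\beta)=\sharp_\omega(\beta)+\beta(\xi)\xi$ with $\xi=\sharp_\omega(\theta)$ and expanding $\omega(\sharp_{\omega,\theta}(\beta),\sharp_{\omega,\theta}(\gamma))$ by bilinearity, the diagonal term is $\pi(\beta,\gamma)$, the term carrying $\omega(\xi,\xi)$ vanishes, and the two cross terms $\gamma(\xi)\,\omega(\sharp_\omega(\beta),\sharp_\omega(\theta))$ and $\beta(\xi)\,\omega(\sharp_\omega(\theta),\sharp_\omega(\gamma))$ equal $-\beta(\xi)\gamma(\xi)$ and $+\beta(\xi)\gamma(\xi)$, hence cancel, leaving exactly $\pi(\beta,\gamma)$.

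Next I would expand both sides through the Leibniz rule. The contravariant derivative of the bivector $\pi$ reads
$$
\mathcal{D}\pi(\alpha,\beta,\gamma)=\sharp_{\omega,\theta}(\alpha)\cdot\pi(\beta,\gamma)-\pi(\mathcal{D}_\alpha\beta,\gamma)-\pi(\beta,\mathcal{D}_\alpha\gamma).
$$
Substituting the key identity into each of the three $\pi$-terms turns them into $\omega(\sharp_{\omega,\theta}\beta,\sharp_{\omega,\theta}\gamma)$, $\omega(\sharp_{\omega,\theta}\mathcal{D}_\alpha\beta,\sharp_{\omega,\theta}\gamma)$ and $\omega(\sharp_{\omega,\theta}\beta,\sharp_{\omega,\theta}\mathcal{D}_\alpha\gamma)$. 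Theorem \ref{loc-conf-symp-met-ass-levi-civita} gives $\sharp_{\omega,\theta}(\mathcal{D}_\alpha\beta)=\nabla_{\sharp_{\omega,\theta}(\alpha)}\sharp_{\omega,\theta}(\beta)$ and likewise for $\gamma$, so the expression becomes
$$
\sharp_{\omega,\theta}(\alpha)\cdot\omega(\sharp_{\omega,\theta}\beta,\sharp_{\omega,\theta}\gamma)-\omega(\nabla_{\sharp_{\omega,\theta}(\alpha)}\sharp_{\omega,\theta}\beta,\sharp_{\omega,\theta}\gamma)-\omega(\sharp_{\omega,\theta}\beta,\nabla_{\sharp_{\omega,\theta}(\alpha)}\sharp_{\omega,\theta}\gamma),
$$
which is precisely $(\nabla_{\sharp_{\omega,\theta}(\alpha)}\omega)(\sharp_{\omega,\theta}\beta,\sharp_{\omega,\theta}\gamma)=\nabla\omega(\sharp_{\omega,\theta}\alpha,\sharp_{\omega,\theta}\beta,\sharp_{\omega,\theta}\gamma)$, as claimed.

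The only non-formal ingredient is the identity of the first step; everything in the second step is the formal matching of the two Leibniz rules under the intertwining map $\sharp_{\omega,\theta}$. I therefore expect the cancellation of the two cross terms — where the locally conformally symplectic data ($\xi=\sharp_\omega(\theta)$, $\omega(\xi,\xi)=0$, and the non-degeneracy of $\omega$ that makes $\sharp_{\omega,\theta}$ an isomorphism by Proposition \ref{omega-theta-alg}) genuinely enter — to be the heart of the argument, the remainder being a direct consequence of Theorem \ref{loc-conf-symp-met-ass-levi-civita}.
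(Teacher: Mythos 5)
Your proof is correct and follows essentially the same route as the paper: establish the key identity $\omega(\sharp_{\omega,\theta}(\beta),\sharp_{\omega,\theta}(\gamma))=\pi(\beta,\gamma)$ (the paper gets the cancellation of the cross terms via $\omega(\xi,\sharp_\pi(\alpha))=\alpha(\xi)$, you via $\sharp_\pi=\sharp_\omega$ and $\omega(X,\sharp_\omega(\delta))=\delta(X)$, which is the same computation), then expand $\nabla\omega$ by the Leibniz rule and invoke Theorem \ref{loc-conf-symp-met-ass-levi-civita}. You merely write out explicitly the final Leibniz-rule matching that the paper leaves to the reader.
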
 
\begin{proof}
On a $\omega \left( \xi
,\sharp_{\pi}(\alpha) \right) =- i_{\sharp_{\pi}(\alpha)}\omega(\xi)
=\alpha(\xi)$ et de m\^eme
$\omega(\xi,\sharp_\pi(\beta))=\beta(\xi)$, par cons\'equent 
\begin{equation}\label{omega-pi-xi}
\omega(\sharp_{\pi,\xi}(\alpha) ,\sharp_{\pi,\xi}(\beta))=\pi(\alpha,\beta).
\end{equation} 
Il suffit maintenant de calculer $\nabla \omega(\sharp_{\pi,\xi}(\alpha),\sharp_{\pi,\xi}(\beta),\sharp_{\pi,\xi}(\gamma)) $ et d'utiliser le th\'eor\`eme ci-dessus.
\end{proof}

\section{Compatibilit\'e du triplet $(\pi,\xi,g)$}

\subsection{D\'efinition}

On dit que $g$ est compatible avec le couple $(\pi,\xi)$ ou que le
triplet $(\pi,\xi,g)$ est compatible si
\begin{equation}\label{COMPATIBILITE2}
\mathcal{D}\pi(\alpha,\beta,\gamma)=\frac{1}{2}\left( \gamma(\xi)
\pi(\alpha ,\beta)-\beta(\xi) \pi(\alpha,\gamma) - J^\ast\gamma(\xi)
g^\ast(\alpha,\beta) + J^\ast\beta(\xi) g^\ast(\alpha,\gamma)
\right),
\end{equation}
pour tous $\alpha ,\beta,\gamma \in \Omega ^{1}(M)$. La formule (\ref{COMPATIBILITE2}) peut aussi s'\'ecrire sous la forme
\begin{equation}\label{COMPATIBILITE}
\left( \mathcal{D}_{\alpha }J^\ast\right) \beta =\frac{1}{2}\left(
\pi(\alpha,\beta)\flat_g(\xi) - \beta(\xi)J^\ast\alpha +
g^\ast(\alpha,\beta) J^\ast\flat_g(\xi) + J^\ast\beta(\xi)\alpha
\right),
\end{equation}
pour tous $\alpha ,\beta \in \Omega ^{1}(M)$.

La compatibilt\'e dans le cas o\`u le champ $\xi$ est nul signifie
que $(M,\pi,g)$ est une vari\'et\'e de Poisson pseudo-riemannienne,
et de Poisson riemannienne si de plus la m\'etrique $g$ est
d\'efinie positive.

\subsection{Vari\'et\'es $\frac{1}{2}$-Kenmotsu}

Rappelons, voir par exemple \cite[\S\,6.6]{blair}, qu'une structure riemannienne presque de contact $(\Phi,\xi,\eta,g)$ sur $M$ est dite $\frac{1}{2}$-Kenmotsu si 
\begin{equation*}
\left( \nabla _{X}\Phi \right)(Y) = \displaystyle\frac{1}{2}\left( g(\Phi(X),Y) \xi -\eta(Y)\Phi(X)\right),
\end{equation*}
pour tous $X,Y\in \mathfrak{X}(M)$.

\begin{lem}\label{phi-sharp-pi}
Supposons que $(\Phi,\xi,\eta,g)$ est une structure pseudo-riemannienne
presque de contact sur $M$ et soit $\pi$ le champ de bivecteurs associ\'e. Si l'alg\'ebro\"ide altern\'e $(T^\ast M,\sharp_{\pi,\xi},\left[.,.\right] _{\pi ,\xi }^{g})$ est un pr\'ealg\'ebro\"ide de Lie, alors 
$$
\sharp_{\pi,\xi}\left((\mathcal{D}_{\alpha}
J^\ast)\beta\right)=-\left(\nabla_{\sharp_{\pi,\xi}(\alpha)}\Phi\right)(\sharp_{\pi,\xi}(\beta)),
$$
pour tous $\alpha,\beta\in\Omega^1(M)$.
\end{lem}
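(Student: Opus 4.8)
The plan is to reduce the whole identity, via Corollary \ref{riem-presque-contact-levi-civita} (which converts the contravariant $\mathcal{D}$ into the covariant $\nabla$), to a single algebraic relation between $J^\ast$ and $\Phi$. Writing $A=\sharp_{\pi,\xi}(\alpha)$ and $B=\sharp_{\pi,\xi}(\beta)$, I would first expand by the Leibniz rule
$$(\mathcal{D}_\alpha J^\ast)\beta=\mathcal{D}_\alpha(J^\ast\beta)-J^\ast(\mathcal{D}_\alpha\beta),$$
apply $\sharp_{\pi,\xi}$, and use Corollary \ref{riem-presque-contact-levi-civita} on the first summand with input form $J^\ast\beta$, so that $\sharp_{\pi,\xi}(\mathcal{D}_\alpha(J^\ast\beta))=\nabla_A\big(\sharp_{\pi,\xi}(J^\ast\beta)\big)$. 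Everything then hinges on understanding how $\sharp_{\pi,\xi}$ carries $J^\ast$ over to $TM$.

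The key step, and the only genuine computation I expect, is the intertwining identity
$$\sharp_{\pi,\xi}\circ J^\ast=-\Phi\circ\sharp_{\pi,\xi}.$$
To establish it I would use the two formulas obtained inside the proof of Proposition \ref{pi-ass-presque-contact}, namely $\sharp_\pi(\gamma)=-\Phi(\sharp_g(\gamma))$ and the relation \eqref{f1}, i.e. $\sharp_{\pi,\xi}(\gamma)=-\Phi(\sharp_g(\gamma))+\eta(\sharp_g(\gamma))\xi$, together with $\flat_g(\xi)=\eta$ (hence $\eta(\sharp_g(\gamma))=\gamma(\xi)$). Applying $-\Phi$ to \eqref{f1} and invoking the almost-contact relations $\Phi^2=-\id_{TM}+\eta\otimes\xi$ and $\Phi(\xi)=0$ gives $-\Phi(\sharp_{\pi,\xi}(\gamma))=-\sharp_g(\gamma)+\gamma(\xi)\xi$. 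On the other hand, the defining relations \eqref{J} give $\sharp_\pi=\sharp_g\circ J^\ast$, which compared with $\sharp_\pi=-\Phi\circ\sharp_g$ forces $J=-\Phi$; consequently $(J^\ast)^2\gamma=-\gamma+\gamma(\xi)\eta$ and $(J^\ast\gamma)(\xi)=g(J\sharp_g(\gamma),\xi)=-\eta(\Phi(\sharp_g(\gamma)))=0$, so that $\sharp_{\pi,\xi}(J^\ast\gamma)=\sharp_\pi(J^\ast\gamma)+(J^\ast\gamma)(\xi)\xi=-\sharp_g(\gamma)+\gamma(\xi)\xi$, coinciding with the previous expression and proving the intertwining.

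With this in hand the conclusion is immediate. The first summand becomes $\nabla_A\big(\sharp_{\pi,\xi}(J^\ast\beta)\big)=\nabla_A(-\Phi(B))=-\nabla_A(\Phi(B))$, and, applying Corollary \ref{riem-presque-contact-levi-civita} once more to $\mathcal{D}_\alpha\beta$, the second becomes $\sharp_{\pi,\xi}(J^\ast(\mathcal{D}_\alpha\beta))=-\Phi(\sharp_{\pi,\xi}(\mathcal{D}_\alpha\beta))=-\Phi(\nabla_A B)$. Subtracting,
$$\sharp_{\pi,\xi}\big((\mathcal{D}_\alpha J^\ast)\beta\big)=-\nabla_A(\Phi(B))+\Phi(\nabla_A B)=-(\nabla_A\Phi)(B),$$
which is the asserted formula. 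The only delicate point throughout is the bookkeeping of the $\xi$-components hidden in $\sharp_{\pi,\xi}$; once the intertwining is secured, the rest is a formal consequence of the Leibniz rule and the Corollary.
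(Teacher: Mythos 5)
Your proposal is correct and follows essentially the same route as the paper: establish the intertwining identity $\sharp_{\pi,\xi}\circ J^\ast=-\Phi\circ\sharp_{\pi,\xi}$ from formula \eqref{f1} and the almost-contact relations, then expand $(\mathcal{D}_\alpha J^\ast)\beta$ by Leibniz and convert both terms to $\nabla$ via Corollary \ref{riem-presque-contact-levi-civita}. Your verification of the intertwining (computing both sides down to $-\sharp_g(\gamma)+\gamma(\xi)\xi$) is a slightly longer but equivalent version of the paper's one-line derivation from $\sharp_g\circ J^\ast=J\circ\sharp_g$.
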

\begin{proof}
En  utilisant la formule (\ref{f1}) et le fait qu'on a $\sharp_g\circ J^\ast=J\circ \sharp_g$, il vient que
$
\sharp_{\pi,\xi}(J^\ast\alpha) = -\Phi(\sharp_g(J^\ast\alpha))+
\eta(\sharp_g(J^\ast\alpha))\xi=-\Phi(\sharp_\pi(\alpha))=-\Phi(\sharp_{\pi,\xi}(\alpha)).
$
Donc 
\begin{equation}\label{pi-xi-J-ast-Phi}
\sharp_{\pi,\xi}\circ J^\ast=-\Phi \circ \sharp_{\pi,\xi}.
\end{equation}
D'o\`u, avec le corollaire
\ref{riem-presque-contact-levi-civita}, on a
\begin{eqnarray*}
\sharp_{\pi,\xi}\left( \left( \mathcal{D}_{\alpha }J^\ast\right)
\beta \right) &=&\sharp_{\pi,\xi}\left( \mathcal{D}_{\alpha }\left(
J^\ast\beta \right) \right) -\left( \sharp _{\pi ,\xi }\circ
J^\ast\right) \left( \mathcal{D}_{\alpha
}\beta \right) , \\
&=&\mathcal{\nabla }_{\sharp _{\pi ,\xi }\left( \alpha \right)
}\left( \sharp _{\pi ,\xi }\left( J^\ast\beta \right) \right) +\Phi
\left( \sharp _{\pi
,\xi }\left( \mathcal{D}_{\alpha }\beta \right) \right) , \\
&=&-\mathcal{\nabla }_{\sharp_{\pi,\xi}(\alpha)}\left( \Phi \left( \sharp_{\pi,\xi}(\beta)\right) \right) +\Phi
\left(
\mathcal{\nabla }_{\sharp_{\pi,\xi}(\alpha)}\sharp_{\pi,\xi}(\beta)\right) , \\
&=&-\left( \mathcal{\nabla }_{\sharp_{\pi,\xi}(\alpha)}\Phi \right)(\sharp_{\pi,\xi}(\beta)).
\end{eqnarray*}
\end{proof}

\begin{prop}
Sous les m\^emes hypoth\`eses que dans le lemme ci-dessus, la compatibilit\'e du triplet $(\pi,\xi,g)$ est \'equivalente \`a
$$
(\nabla_X \Phi)(Y)=\dfrac{1}{2}\left(g(\Phi(X),Y)\xi
-\eta(Y)\Phi(X)\right),
$$
pour tous $X,Y\in \mathfrak{X}(M)$, et si de plus la m\'etrique $g$ est d\'efinie positive, alors le triplet $(\pi,\xi,g)$ est compatible si et seulement si la
vari\'et\'e riemannienne presque de contact $(M,\Phi,\xi,\eta,g)$
est $\frac{1}{2}$-Kenmotsu.
\end{prop}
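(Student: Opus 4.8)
The plan is to transport the compatibility identity (\ref{COMPATIBILITE}) through the anchor map $\sharp_{\pi,\xi}$, which is an isomorphism since it is an isometry by Proposition \ref{pi-ass-presque-contact}, and to read off on each side one of the two members of the $\frac{1}{2}$-Kenmotsu relation. Fix $\alpha,\beta\in\Omega^1(M)$ and set $X=\sharp_{\pi,\xi}(\alpha)$ and $Y=\sharp_{\pi,\xi}(\beta)$. By Lemma \ref{phi-sharp-pi} the left-hand side of (\ref{COMPATIBILITE}) is sent to $-(\nabla_X\Phi)(Y)$, so the whole problem reduces to computing the image of the right-hand side.

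First I would assemble the dictionary between the two sides. Because $\sharp_{\pi,\xi}$ is an isometry one has $g^\ast(\alpha,\beta)=g(X,Y)$. Since $\flat_g(\xi)=\eta$ and $\Phi(\xi)=0$, the relation (\ref{f1}) gives $\sharp_{\pi,\xi}(\eta)=\xi$; hence $\alpha(\xi)=g^\ast(\alpha,\eta)=g(X,\xi)=\eta(X)$, and likewise $\beta(\xi)=\eta(Y)$. Combining the commutation relation (\ref{pi-xi-J-ast-Phi}), namely $\sharp_{\pi,\xi}\circ J^\ast=-\Phi\circ\sharp_{\pi,\xi}$, with the isometry property and the defining identity $g^\ast(J^\ast\alpha,\beta)=\pi(\alpha,\beta)$ of (\ref{J}), I obtain $\pi(\alpha,\beta)=-g(\Phi(X),Y)$. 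Finally, applying the identity $\gamma(\xi)=\eta(\sharp_{\pi,\xi}(\gamma))$ to $\gamma=J^\ast\beta$ and using $\eta\circ\Phi=0$ yields $J^\ast\beta(\xi)=-\eta(\Phi(Y))=0$, and in the same way $\sharp_{\pi,\xi}(J^\ast\eta)=-\Phi(\xi)=0$.

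Then I would push the right-hand side of (\ref{COMPATIBILITE}) through $\sharp_{\pi,\xi}$ term by term: the first term contributes $\pi(\alpha,\beta)\sharp_{\pi,\xi}(\eta)=-g(\Phi(X),Y)\,\xi$, the second contributes $-\beta(\xi)\sharp_{\pi,\xi}(J^\ast\alpha)=\eta(Y)\Phi(X)$, while the third is killed by $\sharp_{\pi,\xi}(J^\ast\eta)=0$ and the fourth by $J^\ast\beta(\xi)=0$. Equating with the left-hand side, (\ref{COMPATIBILITE}) becomes, after applying $\sharp_{\pi,\xi}$, the identity $-(\nabla_X\Phi)(Y)=-\frac{1}{2}(g(\Phi(X),Y)\xi-\eta(Y)\Phi(X))$, that is, $(\nabla_X\Phi)(Y)=\frac{1}{2}(g(\Phi(X),Y)\xi-\eta(Y)\Phi(X))$. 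Since $\sharp_{\pi,\xi}$ is bijective, as $\alpha,\beta$ range over $\Omega^1(M)$ the vectors $X,Y$ range over all of $\mathfrak{X}(M)$, so compatibility of $(\pi,\xi,g)$ is equivalent to the displayed relation for all $X,Y$; when $g$ is moreover positive definite, $(M,\Phi,\xi,\eta,g)$ is a Riemannian almost contact manifold and this relation is exactly the $\frac{1}{2}$-Kenmotsu condition, which finishes the proof. The only genuinely delicate points are the bookkeeping of signs and the vanishing of the two auxiliary terms; everything else is the routine translation supplied by the isometry and the commutation relation $\sharp_{\pi,\xi}\circ J^\ast=-\Phi\circ\sharp_{\pi,\xi}$.
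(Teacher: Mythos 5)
Your proposal is correct and follows essentially the same route as the paper: both arguments push the identity (\ref{COMPATIBILITE}) through the isomorphism $\sharp_{\pi,\xi}$, use Lemme \ref{phi-sharp-pi} to identify the left-hand side with $-(\nabla_X\Phi)(Y)$, and translate the right-hand side via $\sharp_{\pi,\xi}(\eta)=\xi$, the relation $\sharp_{\pi,\xi}\circ J^\ast=-\Phi\circ\sharp_{\pi,\xi}$, and the vanishing of $J^\ast\flat_g(\xi)$ and $J^\ast\beta(\xi)$. The only cosmetic difference is that the paper kills those two terms before applying the anchor while you do it term by term afterwards, and your $\pi(\alpha,\beta)=-g(\Phi(X),Y)$ agrees with the paper's $g(X,\Phi(Y))$ by the skew-symmetry of $\Phi$ with respect to $g$.
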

\begin{proof}
Comme on a $J^\ast\flat_g(\xi)=\flat_g(J\xi)=-\flat_g(\Phi \xi)=0$
et
$$
J^\ast\beta(\xi)=J^\ast\beta(\sharp_g(\eta))=\eta(\sharp_g(J^\ast\beta))=\eta(J
\sharp_g(\beta))=-\eta(\Phi(\sharp_g(\beta)))=0, 
$$
alors la formule (\ref{COMPATIBILITE}) devient 
$$
\left( \mathcal{D}_{\alpha }J^\ast\right)
\beta  =\frac{1}{2}\left(
\pi(\alpha,\beta)\eta-
\beta(\xi)J^\ast\alpha \right),
$$
En appliquant $\sharp_{\pi,\xi}$ qui, d'apr\`es la proposition \ref{pi-ass-presque-contact} est une isom\'etrie, donc un isomorphisme, cette derni\`ere formule est \'equivalente \`a 
$$
\sharp_{\pi,\xi}\left(\left( \mathcal{D}_{\alpha }J^\ast\right)
\beta \right) =\frac{1}{2}\left(
\pi(\alpha,\beta)\sharp_{\pi,\xi}(\eta) -
\beta(\xi)\sharp_{\pi,\xi}(J^\ast\alpha) \right). 
$$
Maintenant, d'apr\`es la formule (\ref{f1}),  on a $\sharp_{\pi,\xi}(\eta)=\xi$, et  si on pose $X=\sharp_{\pi,\xi}(\alpha)$ et $Y=\sharp_{\pi,\xi}(\beta) $, alors on a $\beta(\xi)=\eta(Y)$, aussi en utilisant (\ref{pi-xi-J-ast-Phi}), on a  
$\sharp_{\pi,\xi}(J^\ast\alpha)=-\Phi(X)$ et
$$
\pi(\alpha,\beta)
=g(\sharp_g(\alpha),\Phi(\sharp_g(\beta)))=-g(\sharp_g(\alpha),\sharp_g(J^\ast\beta))
=-g^\ast(\alpha,J^\ast\beta) =g(X,\Phi(Y)).
$$ 
Il reste \`a utiliser le lemme ci-dessus. 
\end{proof}

\begin{theo}
Supposons que $(\eta,g)$ est une structure riemannienne de contact sur $M$ et soit
$(\Phi,\xi,\eta,g)$ la structure riemannienne presque de contact
associ\'ee. Supposons que $(\pi,\xi)$ est la structure de Jacobi associ\'ee \`a
la forme de contact $\eta$. Alors le triplet $(\pi,\xi,g)$ est compatible si et seulement si $(M,\Phi,\xi,\eta,g)$ est
$\frac{1}{2}$-Kenmotsu.
\end{theo}
\begin{proof}
Nous avons montr\'e que $\pi$ est bien le champ de bivecteurs de la proposition ci-dessus  et que $\lambda=\eta$, voir la d\'emonstration du th\'eor\`eme 
\ref{riemannienne-contact-levi-civita}.
\end{proof}

\subsection{Vari\'et\'es localement conform\'ement k\"ahl\'eriennes}

Rappelons que si $\omega$ est une $2$-forme non d\'eg\'en\'er\'ee et que $g$ est une m\'etrique riemannienne associ\'ee, la structure presque hermitienne $(\omega,g)$ est hermitienne si la structure presque complexe associ\'ee est int\'egrable, et k\"ahl\'erienne si de plus $\omega$ est ferm\'ee. Rappelons aussi que si $(\omega,g)$ est presque hermitienne, alors elle est k\"ahl\'erienne si et seulement si la $2$-forme $\omega$ est parall\`ele pour la connexion de Levi-Civita de $g$.  

Si $(\omega,\theta)$ est une structure localement conform\'ement symplectique et $(\omega,g)$ une structure
hermitienne, on dit que le triplet $(\omega,\theta,g)$ est une structure localement conform\'ement k\"ahl\'erienne. 

Nous allons montrer que si $(\omega,\theta)$ est une structure localement conform\'ement symplectique sur $M$ et que $(\pi,\xi)$ est la structure de Jacobi associ\'ee, si $g$ est une m\'etrique riemannienne associ\'ee \`a $\omega$ et \`a $(\omega,\theta)$, la compatibilt\'e du triplet $(\pi,\xi,g)$ induit une structure localement
conform\'ement k\"ahl\'erienne sur $M$.

\begin{lem}
Supposons que $\omega\in \Omega^2(M)$ est une $2$-forme non d\'eg\'en\'er\'ee et
soit $\theta\in\Omega^1(M)$. Supposons que $(\pi,\xi)$ est le couple associ\'e
\`a $(\omega,\theta)$. Si la m\'etrique pseudo-riemannienne $g$ est associ\'ee \`a la $2$-forme $\omega$ et au couple $(\omega,\theta)$, alors on a 
$$
J\circ \sharp_{\pi,\xi}=\sharp_{\pi,\xi}\circ J^\ast.
$$ 
\end{lem}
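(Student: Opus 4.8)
Le plan est d'exploiter que, sous l'hypothèse que $g$ est associée au couple $(\omega,\theta)$, l'ancre $\sharp_{\pi,\xi}$ est une isométrie (c'est la formule (\ref{met-ass-loc-conf-symp})), donc, d'après la proposition \ref{omega-theta-alg}, un isomorphisme de fibrés vectoriels. Comme $g$ est non dégénérée et que les vecteurs $\sharp_{\pi,\xi}(\gamma)$, pour $\gamma\in\Omega^1(M)$, parcourent tout $TM$, il suffira d'établir l'égalité
\begin{equation*}
g\left(J\,\sharp_{\pi,\xi}(\alpha),\sharp_{\pi,\xi}(\gamma)\right)=g\left(\sharp_{\pi,\xi}(J^\ast\alpha),\sharp_{\pi,\xi}(\gamma)\right)
\end{equation*}
pour toutes $\alpha,\gamma\in\Omega^1(M)$, l'identité $J\circ\sharp_{\pi,\xi}=\sharp_{\pi,\xi}\circ J^\ast$ en découlant aussitôt par non-dégénérescence et surjectivité de $\sharp_{\pi,\xi}$.

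Pour calculer le membre de droite, j'utiliserais d'abord le fait que $\sharp_{\pi,\xi}$ est une isométrie, ce qui donne $g(\sharp_{\pi,\xi}(J^\ast\alpha),\sharp_{\pi,\xi}(\gamma))=g^\ast(J^\ast\alpha,\gamma)$, puis la définition de $J^\ast$ fournie par les formules (\ref{J}), qui ramène directement cette quantité à $\pi(\alpha,\gamma)$. Pour le membre de gauche, la clé est la relation $\omega(X,Y)=g(JX,Y)$, valable lorsque $g$ est associée à la $2$-forme $\omega$ (c'est la reformulation presque hermitienne rappelée avant l'énoncé). Appliquée à $X=\sharp_{\pi,\xi}(\alpha)$ et $Y=\sharp_{\pi,\xi}(\gamma)$, elle identifie $g(J\,\sharp_{\pi,\xi}(\alpha),\sharp_{\pi,\xi}(\gamma))$ à $\omega(\sharp_{\pi,\xi}(\alpha),\sharp_{\pi,\xi}(\gamma))$, que la formule (\ref{omega-pi-xi}) — établie dans la preuve du corollaire \ref{levi-civita-omega} — transforme elle aussi en $\pi(\alpha,\gamma)$. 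Les deux membres valant $\pi(\alpha,\gamma)$, l'égalité voulue est acquise.

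La principale difficulté sera de faire intervenir proprement et séparément les deux hypothèses de compatibilité : d'un côté l'isométrie de $\sharp_{\pi,\xi}$ (métrique associée au couple $(\omega,\theta)$) pour traiter le facteur $J^\ast$, de l'autre la relation $\omega=g(J\cdot,\cdot)$ (métrique associée à la $2$-forme $\omega$) pour traiter le facteur $J$. Il faudra en particulier s'assurer que la formule (\ref{omega-pi-xi}) s'applique dans ce cadre, et, au besoin, rappeler le lien $\sharp_g\circ J^\ast=J\circ\sharp_g$ issu des formules (\ref{J}) afin de justifier les passages entre $J$, $J^\ast$ et $\sharp_{\pi,\xi}$. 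La stratégie de contracter contre $\sharp_{\pi,\xi}(\gamma)$ plutôt que contre un champ arbitraire est essentielle, car c'est elle qui fait apparaître $\pi(\alpha,\gamma)$ symétriquement des deux côtés.
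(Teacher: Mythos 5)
Votre démonstration est correcte et suit exactement la même démarche que celle du papier : on contracte contre $\sharp_{\pi,\xi}(\gamma)$, on identifie le membre en $J$ à $\pi(\alpha,\gamma)$ via $\omega(X,Y)=g(JX,Y)$ et la formule (\ref{omega-pi-xi}), le membre en $J^\ast$ à $\pi(\alpha,\gamma)$ via l'isométrie et la définition (\ref{J}), puis on conclut par non-dégénérescence et bijectivité de $\sharp_{\pi,\xi}$. Rien à redire.
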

\begin{proof} 
Puisque on a suppos\'e ici que la m\'etrique $g$ est associ\'ee \`a $\omega$ et en utilisant (\ref{omega-pi-xi}), on a  
$$
g(J\sharp_{\pi,\xi}(\alpha),\sharp_{\pi,\xi}(\beta))=\omega(\sharp_{\pi,\xi}(\alpha),\sharp_{\pi,\xi}(\beta))=\pi(\alpha,\beta)=g^\ast(J^\ast\alpha,\beta), 
$$  
et puisque on a suppos\'e que la m\'etrique $g$ est aussi associ\'ee au couple $(\omega,\theta)$, donc que $\sharp_{\pi,\xi}$ est une isom\'etrie, alors 
$$
g(J\sharp_{\pi,\xi}(\alpha),\sharp_{\pi,\xi}(\beta))=g(\sharp_{\pi,\xi}(J^\ast\alpha),\sharp_{\pi,\xi}(\beta)).
$$
Enfin, comme $\sharp_{\pi,\xi}$ est une isom\'etrie, donc un isomorphisme, le r\'esultat en d\'ecoule.  
\end{proof}

\begin{theo}
Supposons que $(\omega,df)$ est une structure conform\'ement symplectique sur $M$ et   que $(\pi,\xi)$ est la structure de Jacobi associ\'ee. Si $g$ est une m\'etrique riemannienne associ\'ee \`a $\omega$ et au couple $(\omega ,df)$. Alors, le triplet $(\pi,\xi,g)$ est compatible si et seulement si le triplet $(\omega,df,g)$ est une structure conform\'ement
k\"ahl\'erienne.
\end{theo}
\begin{proof}
Il s'agit donc de montrer que le triplet $(\pi,\xi,g)$ est compatible si et seulement si le couple $(e^{f}\omega,e^{f}g)$ est compatible, c'est-\`a-dire, si et
seulement si la $2$-forme $e^f\omega$ est parall\`ele pour la
connexion de Levi-Civita $\nabla^f$ associ\'ee \`a la m\'etrique
$g^f=e^fg$. Comme les connexions $\nabla$ et $\nabla^f$ sont reli\'ees  par la formule  
$$
\nabla _{X}^{f}Y=\nabla _{X}Y+\frac{1}{2}\left(
X(f)Y+Y(f)X-g(X,Y)\textrm{grad}_g f \right), 
$$
o\`u $\textrm{grad}_gf=\sharp_g(df)$, il vient que 
$$
\begin{array}{lll}
\nabla^f\omega(X,Y,Z) & = & \nabla\omega(X,Y,Z)-X(f)\omega(Y,Z)-\frac{1}{2}Y(f)\omega(X,Z)+\frac{1}{2}Z(f)\omega(X,Y)\\
 &   & +\frac{1}{2}\left(g(X,Y)\omega(\textrm{grad}_g f,Z)-g(X,Z)\omega(\textrm{grad}_g
 f,Y)\right),
\end{array}
$$
et donc que 
$$
\nabla^f (e^f\omega)(X,Y,Z)= e^f\left(X(f)\omega(Y,Z)+\nabla^f\omega(X,Y,Z)\right) =e^f \Lambda_f(X,Y,Z), 
$$
o\`u on a pos\'e 
$$
\begin{array}{lll}
\Lambda_f(X,Y,Z) &=& \nabla\omega(X,Y,Z)-\dfrac{1}{2}\left(Y(f)\omega(X,Z)-Z(f)\omega(X,Y)\right) \\ 
& & +\frac{1}{2}\left(g(X,Y)\omega(\textrm{grad}_g f,Z)-g(X,Z)\omega(\textrm{grad}_g f,Y)\right).
 \end{array}
$$
On en d\'eduit que  $\nabla^f(e^f\omega)=0$ si et seulement si $\Lambda_f=0$, donc que le couple $(e^f\omega,e^fg)$ est compatible si et seulement si 
$$
\nabla\omega(X,Y,Z)\!=\!\dfrac{1}{2}\left(Y(f)\omega(X,\!Z)\!-\!Z(f)\omega(X,\!Y)
 \!-\!g(X,\!Y)\omega(\textrm{grad}_g f,\!Z)\!+\!g(X,\!Z)\omega(\textrm{grad}_g f,\!Y)\right).
$$ 
Montrons maintenant que cette derni\`ere identit\'e est \'equivalente 
\`a la formule (\ref{COMPATIBILITE2}). Soient $\alpha ,\beta, \gamma \in \Omega ^{1}(M)$ tels que 
$X=\sharp_{\pi,\xi}(\alpha)$, $Y=\sharp_{\pi,\xi}(\beta)$ et
$Z=\sharp_{\pi,\xi}(\gamma)$. D'une part, d'apr\`es le corollaire \ref{levi-civita-omega}, on a $\nabla \omega(X,Y,Z)=\mathcal{D}\pi(\alpha,\beta,\gamma)$. D'autre part, en posant $\theta=df$, on a $
Y(f)=\theta(Y)=\theta(\sharp_\pi(\beta))+\beta(\xi)\theta(\xi)=-\beta(\sharp_\pi(\theta))=-\beta(\xi)$ et de m\^eme  $Z(f)=-\gamma(\xi)$.  
Aussi, d'apr\`es (\ref{omega-pi-xi})
, on a $\omega(X,Y)=\pi(\alpha,\beta)$ et $\omega(X,Z)=\pi(\alpha,\gamma)$. 
Enfin, comme $g$ est une m\'etrique associ\'ee \`a $\omega$, il vient que 
$$
\omega(\textrm{grad}_g f, Y) =-\omega(Y,\sharp_g(\theta))=-g(JY,\sharp_g(\theta))=-\theta(JY)=\omega(\xi,JY)=\omega(\sharp_{\pi,\xi}(\theta),J\sharp_{\pi,\xi}(\beta)),
$$
et puisque $g$ est associ\'ee \`a $\omega$ et \`a $(\omega,\theta)$, en utilisant le lemme ci-dessus et (\ref{omega-pi-xi}), on obtient 
$$
\omega(\textrm{grad}_g f, Y)=\omega(\sharp_{\pi,\xi}(\theta),\sharp_{\pi,\xi}(J^\ast\beta))=\pi(\theta,J^\ast\beta)=J^\ast\beta(\sharp_\pi(\theta))=J^\ast\beta(\xi)
$$
et de m\^eme $\omega(\textrm{grad}_g f, Z)=J^\ast\gamma(\xi)$.
\end{proof}

 \bigskip
Y. A\"{\i}t Amrane, Laboratoire Alg\`ebre et Th\'eorie des
Nombres,\\
Facult\'e de Math\'ematiques,\\
USTHB, BP 32, El-Alia, 16111 Bab-Ezzouar, Alger, Alg\'erie. \\
e-mail : yacinait@gmail.com \bigskip\\
A. Zeglaoui, Laboratoire Alg\`ebre et Th\'eorie des
Nombres,\\
Facult\'e de Math\'ematiques,\\
USTHB, BP 32, El-Alia, 16111 Bab-Ezzouar, Alger, Alg\'erie. \\
e-mail : ahmed.zeglaoui@gmail.com \\

\end{document}